\newtheorem{thrm}{Theorem}[section]
\newtheorem{lem}[thrm]{Lemma}
\newtheorem{exam}[thrm]{Example}
\newtheorem{cor}[thrm]{Corollary}
\theoremstyle{definition}
\numberwithin{equation}{section}
\email{kkaygisiz@gop.edu.tr,ademsahin@gop.edu.tr}
\keywords{Fibonacci numbers, order-$k$ Fibonacci numbers, $k$
sequences of the generalized order-$k$ Fibonacci numbers, Lucas
numbers, order-$k$ Lucas numbers, $k$ sequences of the generalized
order-$k$ Lucas numbers, core polynomial.} \subjclass[2000]{Primary
11B39, 05E05, Secondary 05A17}
\begin{document}
\author{Kenan KAYGISIZ and Adem SAHIN}
\address{ Department of Mathematics\\
Art and Science Faculty, Gaziosmanpasa University\\
60240, TOKAT, TURKEY}
\title{Generalized Lucas Numbers and Relations with Generalized Fibonacci
Numbers}

\begin{abstract}
In this paper, we present a new generalization of the Lucas numbers by
matrix representation using Genaralized Lucas Polynomials. We give some
properties of this new generalization and some relations between the
generalized order-$k$ Lucas numbers \ and generalized order-$k$ Fibonacci
numbers. In addition, we obtain Binet formula and combinatorial
representation for generalized order-$k$ Lucas numbers by using properties
of generalized Fibonacci numbers.
\end{abstract}

\maketitle

\section{Introduction}

There are various types of generalization of Fibonacci and Lucas numbers.
For example Er[1] defined the generalized order-$k$ Fibonacci numbers(GO$k$%
F), K\i l\i \c{c}[6] defined the generalized order-$k$ Pell numbers(GO$k$P)
and Ta\c{s}\c{c}{\i }[3] defined the generalized order-$k$ Lucas numbers (GO$%
k$L). MacHenry[7] defined Generalized Fibonacci and Lucas Polynomials and
MacHenry[8] defined matrices $A_{(k)}^{\infty }$ and $D_{(k)}^{\infty }$
depending on these polynomials.  $A_{(k)}^{\infty }$ is reduced to GO$k$F
when $t_{i}=1$ and $A_{(k)}^{\infty }$ is reduced to GO$k$P when $t_{1}=2$
and $t_{i}=1$ (for $2\leq i\leq k$). This analogy shows the importance of
the matrix $A_{(k)}^{\infty }$ and Generalized Fibonacci and Lucas
polynomials in generalizations. However, Lucas generalization of Ta\c{s}\c{c}%
{\i }[3] is not compatible with the matrix $A_{(k)}^{\infty }$ and
Generalization Fibonacci and Lucas polynomials, we studied on generalized
order-$k$ Lucas numbers $l_{k,n}$ (GO$k$L) and $k$ sequences of the
generalized order-$k$ Lucas numbers $l_{k,n}^{i}$ ($k$SO$k$L) with the help
of Lucas Polynomials $G_{k,n}$ and the matrix $D_{(k)}^{\infty }$. In this
paper, after presenting a matrix representation of $l_{k,n}^{i}$, we derived
a relations between generalized order-$k$ Fibonacci numbers(GO$k$F) and GO$k$%
L, as well as relation between $k$SO$k$L and $k$ sequences of the
generalized order-$k$ Fibonacci numbers $f_{k,n}^{\text{ }i}$($k$SO$k$F)$.$
Since many properties of Fibonacci numbers and it's generalizations are
known, these relations are very important. Using these relations, properties
of Lucas numbers and properties of it's generalizations can be obtained. In
addition to obtaining these relations, we give a generalized Binet formula
and combinatorial representation for $k$SO$k$L with the help of properties
of generalized Fibonacci numbers.\bigskip 

\subsection{Fibonacci and Lucas Numbers and Properties of Fibonacci
Generalization}

\label{sect1} \bigskip \qquad

The well-known Fibonacci sequence $\left\{ f_{n}\right\} $ is defined
recursively by the equation, 
\begin{equation*}
f_{n}=f_{n-1}+f_{n-2},\ \text{for }n\geq 3\ \ 
\end{equation*}%
where $f_{1}=1,$ $f_{2}=1$ and \bigskip Lucas sequence $\left\{
l_{n}\right\} $ is defined recursively by the equation,%
\begin{equation*}
l_{n}=l_{n-1}+l_{n-2},\text{\ for }n\geq 2
\end{equation*}%
where $l_{0}=2,$ $l_{1}=1.$

\bigskip Miles [10] defined generalized order-$k$ Fibonacci numbers(GO$k$F)
as,%
\begin{equation}
f_{k,n}=\sum\limits_{j=1}^{k}f_{k,n-j}\ 
\end{equation}%
for $n>k\geq 2$, with boundary conditions: $f_{k,1}=f_{k,2}=f_{k,3}=\cdots
=f_{k,k-2}=0,$ $f_{k,k-1}=f_{k,k}=1.$\newline
Er [1] defined $k$SO$k$F as; for $n>0,$ $1\leq i\leq k$%
\begin{equation}
f_{k,n}^{\text{ }i}=\sum\limits_{j=1}^{k}c_{j}f_{k,n-j}^{\text{ }i}\ \ 
\end{equation}%
with boundary conditions for $1-k\leq n\leq 0,$

\begin{equation*}
f_{k,n}^{\text{ }i}=\left\{ 
\begin{array}{c}
1\text{ \ \ \ \ \ if \ }i=1-n, \\ 
0\text{ \ \ \ \ \ \ \ \ \ otherwise,}%
\end{array}%
\right.
\end{equation*}%
where $c_{j}$ $(1\leq j\leq k)$ are constant coefficients, $f_{k,n}^{\text{ }%
i}$ is the $n$-th term of $i$-th sequence of order $k$ generalization. $k$%
-th column of this generalization involves the Miles generalization for $%
i=k, $ i.e. $f_{k,n}^{k}=f_{k,k+n-2}.$

\bigskip Er [1] showed 
\begin{equation*}
F_{n+1}^{\sim }=AF_{n}^{\sim }
\end{equation*}%
where%
\begin{equation*}
A=\left[ 
\begin{array}{cccccc}
c_{1} & c_{2} & c_{3} & \cdots & c_{k-1} & c_{k} \\ 
1 & 0 & 0 & \cdots & 0 & 0 \\ 
0 & 1 & 0 & \cdots & 0 & 0 \\ 
\vdots & \vdots & \vdots & \ddots & \vdots & \vdots \\ 
0 & 0 & 0 & \cdots & 1 & 0%
\end{array}%
\right]
\end{equation*}%
is $k\times k$\ \ companion matrix and%
\begin{equation}
\ F_{n}^{\sim }=\left[ 
\begin{array}{cccc}
f_{k,n}^{1} & f_{k,n}^{2} & \cdots & f_{k,n}^{k} \\ 
f_{k,n-1}^{1} & f_{k,n-1}^{2} & \cdots & f_{k,n-1}^{k} \\ 
\vdots & \vdots & \ddots & \vdots \\ 
f_{k,n-k+1}^{1} & f_{k,n-k+1}^{2} & \cdots & f_{k,n-k+1}^{k}%
\end{array}%
\right]
\end{equation}%
is $k\times k$\ matrix.

Karaduman [5] showed $F_{1}^{\sim }=A$ and $F_{n}^{\sim }=A^{n}$ for $%
c_{j}=1,$ $(1\leq j\leq k)$ .

Kalman [2] derived the Binet formula by using Vandermonde matrix, for $%
\lambda _{i}$ $(1\leq i\leq k)$ are roots of the polynomial 
\begin{equation}
P(x;t_{1},t_{2},\ldots ,t_{k})=x^{k}-t_{1}x^{k-1}-\cdots -t_{k}
\end{equation}%
($t_{1,\ldots ,}t_{k}$ are constants) 
\begin{equation}
f_{k,n}^{k}=\sum\limits_{i=1}^{k}\frac{(\lambda _{i})^{n}}{P^{^{\prime
}}(\lambda _{i})}
\end{equation}%
where $f_{k,n}^{k}$ is $($for $c_{j}=1,$ $1\leq j\leq k$ and $i=k)$ $k$-th
sequences of $k$SO$k$F and $P%
{\acute{}}%
(x)$ is derivative of the polynomial (1.4).

\bigskip

K\i l\i \c{c} [5] studied $F_{n}^{\sim }$ and $f_{k,n}^{k}$ and gave some
formulas and properties concerning $k$SO$k$F. One of these is Binet formula
for $k$SO$k$F. For roots of $(1.4)$ named as $\lambda _{i}$ $(1\leq i\leq k)$%
,

\begin{equation}
V=\left[ 
\begin{array}{cccc}
\lambda _{1}^{k-1} & \lambda _{2}^{k-1} & \ldots & \lambda _{k}^{k-1} \\ 
\lambda _{1}^{k-2} & \lambda _{2}^{k-2} & \ldots & \lambda _{k}^{k-2} \\ 
\vdots & \vdots & \ddots & \vdots \\ 
\lambda _{1} & \lambda _{2} & \ldots & \lambda _{k} \\ 
1 & 1 & \ldots & 1%
\end{array}%
\right] \text{ and }d_{k}^{i}=\left[ 
\begin{array}{c}
\lambda _{1}^{k-i+n} \\ 
\lambda _{2}^{k-i+n} \\ 
\vdots \\ 
\lambda _{k}^{k-i+n}%
\end{array}%
\right]
\end{equation}
where $V$ \ is a $k\times k$ Vandermonde matrix and $V_{j}^{(i)}$ is a $%
k\times k$ matrix obtained from $V$ by replacing $j$-th column of $V$ by $%
d_{k}^{i} $, Binet formula of $f_{k,n}^{k}$ is;

\begin{equation}
f_{k,n}^{k}=t_{1k}=\frac{\text{det}(V_{k}^{(1)})}{\text{det(}V\text{)}}.
\end{equation}

\subsection{Generalized Fibonacci and Lucas Polynomials}

MacHenry [7] defined generalized Fibonacci polynomials $(F_{k,n}(t))$, Lucas
polynomials $(G_{k,n}(t))$ and obtained important relations between
generalized Fibonacci and Lucas polynomials, where $t_{i}$ $(1\leq i\leq k)$
are constant coefficients of the core polynomial $(1.4).$ $F_{k,n}(t)$
defined inductively by 
\begin{eqnarray*}
F_{k,n}(t) &=&0,\text{ }n<0 \\
F_{k,0}(t) &=&1 \\
F_{k,1}(t) &=&t_{1} \\
F_{k,n+1}(t) &=&t_{1}F_{k,n}(t)+\cdots +t_{k}F_{k,n-k+1}(t)
\end{eqnarray*}%
where $t=(t_{1},t_{2},\ldots ,t_{k}),$ $k\in 
\mathbb{N}
,$ $n$ is an integer and $G_{k,n}(t_{1},t_{2},\ldots ,t_{k})$ defined by%
\begin{eqnarray}
G_{k,n}(t) &=&0,\text{ }n<0 \\
G_{k,0}(t) &=&\text{ }k  \notag \\
G_{k,1}(t) &=&t_{1}\text{ }  \notag \\
G_{k,n+1}(t) &=&t_{1}G_{k,n}(t)+\cdots +t_{k}G_{k,n-k+1}(t).  \notag
\end{eqnarray}

In addition, in [9] authors obtained $F_{k,n}(t)$ and $G_{k,n}(t)$ ($n,k\in 
\mathbb{N}
$, $n\geq 1)$ as

\begin{equation}
F_{k,n}(t)=\sum\limits_{a\vdash n}\binom{\left\vert a\right\vert }{%
a_{1,\ldots ,}a_{k}}t_{1}^{a_{1}}\ldots t_{k}^{a_{k}}
\end{equation}%
and

\begin{equation}
G_{k,n}(t)=\sum\limits_{a\vdash n}\frac{n}{\left\vert a\right\vert }\binom{%
\left\vert a\right\vert }{a_{1,\ldots ,}a_{k}}t_{1}^{a_{1}}\ldots
t_{k}^{a_{k}}
\end{equation}%
where $a_{i}$ are nonnegative integers for all $i$ $(1\leq i\leq k),$ with
initial conditions given by%
\begin{equation*}
F_{k,0}(t)=1,\text{ }F_{k,-1}(t)=0,\text{ }\ldots ,\text{ }F_{k,-k+1}(t)=0
\end{equation*}%
and%
\begin{equation*}
G_{k,0}(t)=k,\text{ }G_{k,-1}(t)=0,\text{ }\cdots ,\text{ }G_{k,-k+1}(t)=0.
\end{equation*}

In this paper, the notations $a\vdash n$ and $\left\vert a\right\vert $ are
used instead of $\sum\limits_{j=1}^{k}ja_{j}=n$ and $\sum%
\limits_{j=1}^{k}a_{j},$ respectively. A combinatorial representation for
Fibonacci polynomials is given in [9] as 
\begin{equation}
F_{2,n}(t)=\sum\limits_{j=0}^{\left\lceil \frac{n}{2}\right\rceil }(-1)^{j}%
\binom{n-j}{j}F_{1}^{n-2j}(-t_{2})^{j}
\end{equation}%
for $n\in 
\mathbb{Z}
,$ where $\left\lceil \frac{n}{2}\right\rceil =k,$ either $n=2k$ or $n=2k-1.$

In [8], matrices $A_{(k)}^{\infty }$ and $D_{(k)}^{\infty }$ are defined by
using the following matrix,

\begin{equation*}
A_{(k)}=\left[ 
\begin{array}{ccccc}
0 & 1 & 0 & \ldots & 0 \\ 
0 & 0 & 1 & \ldots & 0 \\ 
\vdots & \vdots & \vdots & \ddots & \vdots \\ 
0 & 0 & 0 & \ldots & 1 \\ 
t_{k} & t_{k-1} & t_{k-2} & \ldots & t_{1}%
\end{array}%
\right] \text{ .}
\end{equation*}

They also record the orbit of the $k$-th row vector of $A_{(k)}$ under the
action of $A_{(k)}$, below $A_{(k)}$, and the orbit of the first row of $%
A_{(k)}$ under the action of $A_{(k)}^{-1}$ on the first row of \ $A_{(k)}$
is recorded above $A_{(k)},$ and consider the $\infty \times k$ matrix whose
row vectors are the elements of the doubly infinite orbit of $A_{(k)}$
acting on any one of them. For $k=3,$ $A_{(k)}^{\infty }$ looks like this%
\begin{equation*}
A_{(3)}^{\infty }=\left[ 
\begin{array}{ccc}
\cdots & \cdots & \cdots \\ 
S_{(-n,1^{2})} & -S_{(-n,1)} & S_{(-n)} \\ 
\cdots & \cdots & \cdots \\ 
S_{(-3,1^{2})} & -S_{(-3,1)} & S_{(-3)} \\ 
1 & 0 & 0 \\ 
0 & 1 & 0 \\ 
0 & 0 & 1 \\ 
t_{3} & t_{2} & t_{1} \\ 
\cdots & \cdots & \cdots \\ 
S_{(n-1,1^{2})} & -S_{(n-1,1)} & S_{(n-1)} \\ 
S_{(n,1^{2})} & -S_{(n,1)} & S_{(n)} \\ 
\cdots & \cdots & \cdots%
\end{array}%
\right] \bigskip \ \ \ \ 
\end{equation*}%
and

\ \ 
\begin{equation*}
A_{(k)}^{n}=\left[ 
\begin{array}{ccccc}
(-1)^{k-1}S_{(n-k+1,1^{k-1})} & \cdots & (-1)^{k-j}S_{(n-k+1,1^{k-j})} & 
\cdots & S_{(n-k+1)} \\ 
\cdots & \cdots & \cdots & \cdots & \cdots \\ 
(-1)^{k-1}S_{(n,1^{k-1})} & \cdots & (-1)^{k-j}S_{(n,1^{k-j})} & \cdots & 
S_{(n)}%
\end{array}%
\right] \ \ \ 
\end{equation*}%
where%
\begin{equation}
\ \ S_{(n-r,1^{r})}=(-1)^{r}\sum\limits_{j=r+1}^{n}t_{j}S_{(n-j)},\ 0\leq
r\leq n.
\end{equation}

Derivative of the core polynomial $P(x;t_{1},t_{2},\ldots
,t_{k})=x^{k}-t_{1}x^{k-1}-\cdots -t_{k}$ is $P%
{\acute{}}%
(x)=kx^{k-1}-t_{1}(k-1)x^{k-2}-\cdots -t_{k-1},$ which is represented by the
vector ($-t_{k-1},\ldots ,-t_{1}(k-1),k)$ and the orbit of this vector under
the action of $A_{(k)}$ gives the standard matrix representation $%
D_{(k)}^{\infty }.$

Right hand column of $A_{(k)}^{\infty }$ contains sequence of the
generalized Fibonacci polynomials $F_{k,n}(t)$ and $%
tr(A_{(k)}^{n})=G_{k,n}(t)$ for $n\in 
\mathbb{Z}
,$ where $G_{k,n}(t)$ is the sequence of the generalized Lucas polynomials,
which is also a $t$-linear recursion. In addition, the right hand column of $%
D_{(k)}^{\infty }$ contains sequence of the generalized Lucas polynomials $%
G_{k,n}(t)$.

It is clear that, for $t_{i}=1$ and $c_{i}=1$ $(1\leq i\leq k)$ $%
S_{(n)}=f_{k,n}^{1}$ where $f_{k,n}^{1},$ is the $n$-th term of the first\
sequence of $k$SO$k$F. Moreover, the matrix $A_{(k)}^{\infty }$ involves the
generalization (1.2).

\begin{exam}
We give matrix $A_{(k)}^{\infty }$ for k=3 and the matrix $D_{(k)}^{\infty }$
for $k=4$, while $t_{1}=t_{2}=\cdots =t_{k}=1$%
\begin{equation*}
A_{(3)}^{\infty }=\left[ 
\begin{array}{ccc}
\ldots & \ldots & \ldots \\ 
1 & 0 & 0 \\ 
0 & 1 & 0 \\ 
0 & 0 & 1 \\ 
1 & 1 & 1 \\ 
\ldots & \ldots & \ldots%
\end{array}%
\ \ \right] \ \text{and }D_{(4)}^{\infty }=\left[ 
\begin{array}{cccc}
\ldots & \ldots & \ldots & \ldots \\ 
7 & 1 & 0 & -1 \\ 
-1 & 6 & 0 & -1 \\ 
-1 & -2 & 5 & -1 \\ 
-1 & -2 & -3 & 4 \\ 
\ldots & \ldots & \ldots & \ldots%
\end{array}%
\right]
\end{equation*}
\end{exam}

\section{Generalizations of Lucas Numbers}

\label{ns} \bigskip \bigskip For $t_{s}=1,$ $1\leq s\leq k,$ the Lucas
polynomials $G_{k,n}(t)$ and $D_{(k)}^{\infty }$ together are reduced to%
\begin{equation}
l_{k,n}=\sum\limits_{j=1}^{k}l_{k,n-j}
\end{equation}%
with boundary conditions%
\begin{equation*}
l_{k,1-k}=l_{k,2-k}=\ldots =l_{k,-1}=-1\ \text{and }l_{k,0}=k,
\end{equation*}
which is called generalized order-$k$ Lucas numbers(GO$k$L). When k=2, it is
reduced to ordinary Lucas numbers.

In this paper, we study on positive direction of $D_{(k)}^{\infty }$ for $%
t_{s}=1$, $1\leq s\leq k,$ which can be written explicitly as

\begin{equation}
\ l_{k,n}^{i}=\sum\limits_{j=1}^{k}l_{k,n-j}^{i}
\end{equation}%
for $n>0$ and $1\leq i\leq k,$\ with boundary conditions%
\begin{equation*}
l_{k,n}^{i}=\left\{ 
\begin{array}{lc}
-i\ \ \ \ \ \ \ \ \text{ \ if }i-n<k, &  \\ 
-2n+i\ \text{ \ if }i-n=k, &  \\ 
k-i-1\text{ \ if }i-n>k & 
\end{array}%
\right. \ \ \ 
\end{equation*}%
for $1-k\leq n\leq 0,$ where $l_{k,n}^{i}$ is the $n$-th term of $i$-th
sequence. This generalization is called $k$ sequences of the generalized
order-$k$ Lucas numbers($k$SO$k$L).

Although names are the same, the initial conditions of this generalization
are different from the generalizations in [3]. These initial conditions
arise from Lucas Polynomials and $D_{(k)}^{\infty }.$

When $i=k=2,$ we obtain ordinary Lucas numbers and $l_{k,n}^{k}=l_{k,n}$.

\begin{exam}
Substituting $k=3$ and $i=2$ we obtain the generalized order-$3$ Lucas
sequence as;%
\begin{equation*}
l_{3,-2}^{2}=0,\text{ }l_{3,-1}^{2}=4,\text{ }l_{3,0}^{2}=-2,\text{ }%
l_{3,1}^{2}=2,\text{ }l_{3,2}^{2}=4,\text{ }l_{3,3}^{2}=4,\text{ }\ldots
\end{equation*}
\end{exam}

\begin{lem}
Matrix multiplication and (2.2) can be used to obtain 
\begin{equation*}
L_{n+1}^{\sim }=A_{1}L_{n}^{\sim }
\end{equation*}%
where\ 
\begin{equation}
A_{1}=\left[ 
\begin{array}{cccccc}
1 & 1 & 1 & \ldots & 1 & 1 \\ 
1 & 0 & 0 & \ldots & 0 & 0 \\ 
0 & 1 & 0 & \cdots & 0 & 0 \\ 
\vdots & \vdots & \vdots & \ddots & \vdots & \vdots \\ 
0 & 0 & 0 & \ldots & 1 & 0%
\end{array}%
\right] _{k\times k}=\left[ 
\begin{array}{cccc}
1 & 1 & \ldots & 1 \\ 
&  &  & 0 \\ 
& I &  & \vdots \\ 
&  &  & 0%
\end{array}%
\right] _{k\times k}\ \ \ \ \ \ \ 
\end{equation}%
where $I$ is ($k-1)\times (k-1)$ identity matrix and\ \ we define a $k\times
k$ matrix $L_{n}^{\sim }$ as;%
\begin{equation}
L_{n}^{\sim }=\left[ 
\begin{array}{cccc}
l_{k,n}^{1} & l_{k,n}^{2} & \ldots & l_{k,n}^{k} \\ 
l_{k,n-1}^{1} & l_{k,n-1}^{2} & \ldots & l_{k,n-1}^{k} \\ 
\vdots & \vdots & \ddots & \vdots \\ 
l_{k,n-k+1}^{1} & l_{k,n-k+1}^{2} & \ldots & l_{k,n-k+1}^{k}%
\end{array}%
\right] \ 
\end{equation}%
which is contained by $k\times k$ block of\ $D_{(k)}^{\infty }$ for $t_{i}=1$%
, $1\leq i\leq k.$\ \ \ \ \ 
\end{lem}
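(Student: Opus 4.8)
The plan is to verify the claimed identity entrywise, treating the first row of $A_1$ (which encodes the recurrence) separately from the remaining rows (which merely shift the indices). I would write $(L_n^{\sim})_{r,j}=l_{k,n-r+1}^{j}$ for $1\le r,j\le k$, and read off from (2.3) that the first row of $A_1$ is $(1,1,\ldots ,1)$, while each row $r$ with $2\le r\le k$ has its only nonzero entry, a $1$, in column $r-1$. The proof then reduces to computing the product $A_1L_n^{\sim}$ one row at a time and matching it against $L_{n+1}^{\sim}$.

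For the first row I would expand
\begin{equation*}
(A_1L_n^{\sim})_{1,j}=\sum_{m=1}^{k}(A_1)_{1,m}\,(L_n^{\sim})_{m,j}=\sum_{m=1}^{k}l_{k,n-m+1}^{j}=\sum_{s=0}^{k-1}l_{k,n-s}^{j}.
\end{equation*}
The key step is to recognize the right-hand side as exactly the defining recurrence (2.2) evaluated at $n+1$, namely $l_{k,n+1}^{j}=\sum_{p=1}^{k}l_{k,n+1-p}^{j}$, so that $(A_1L_n^{\sim})_{1,j}=l_{k,n+1}^{j}$, which is precisely the $(1,j)$ entry of $L_{n+1}^{\sim}$.

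For the rows $2\le r\le k$, the single $1$ in column $r-1$ gives
\begin{equation*}
(A_1L_n^{\sim})_{r,j}=(L_n^{\sim})_{r-1,j}=l_{k,n-r+2}^{j}=l_{k,(n+1)-r+1}^{j},
\end{equation*}
which is the $(r,j)$ entry of $L_{n+1}^{\sim}$. Since the two matrices agree in every row and column, they coincide, establishing $L_{n+1}^{\sim}=A_1L_n^{\sim}$.

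The verification is essentially mechanical, so the only point requiring care is the range of validity: the recurrence (2.2) is stated for $n>0$, whence the first-row computation legitimately produces $l_{k,n+1}^{j}$ only when $n+1>0$. I would therefore either restrict attention to $n\ge 0$, or, to cover the transition out of the boundary block, check directly from the piecewise initial data that the summation identity $\sum_{s=0}^{k-1}l_{k,n-s}^{j}=l_{k,n+1}^{j}$ still holds at the small indices involved. This boundary bookkeeping---reconciling the shift structure of $A_1$ with the piecewise initial conditions---is the main thing to watch, although it presents no genuine difficulty.
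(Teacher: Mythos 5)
Your entrywise verification is correct and is exactly the argument the paper intends: the lemma is stated with the remark that it follows from ``matrix multiplication and (2.2),'' and no further proof is given, since the first row of $A_1$ reproduces the recurrence while the remaining rows merely shift the index. Your added care about the range of validity of (2.2) at the boundary indices is a reasonable refinement, not a departure from the paper's (omitted) proof.
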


\begin{lem}
Let $A_{1}$ and $L_{n}^{\sim }$ be as in (2.3) and (2.4), respectively. Then
\linebreak $L_{n+1}^{\sim }=A_{1}^{n+1}L_{0}^{\sim },$ where\ \ \ \ \ \ 
\begin{equation*}
\ L_{0}^{\sim }=\left[ 
\begin{array}{ccccccc}
-1 & -2 & -3 & \ldots & -(k-2) & -(k-1) & k \\ 
-1 & -2 & -3 & \ldots & -(k-2) & k+1 & -1 \\ 
\vdots & \vdots & \vdots & \ldots & k+2 & 0 & -1 \\ 
-1 & -2 & 2k-3 & \ldots & 1 & 0 & -1 \\ 
-1 & 2k-2 & k-4 & \ldots & \vdots & \vdots & \vdots \\ 
2k-1 & k-3 & k-4 & \ldots & 1 & 0 & -1%
\end{array}%
\right] _{k\times k}.
\end{equation*}%
\ \ \ \ \ \ \ \ \ \ \ \ \ \ \ \ \ \ \ \ \ \ \ \ \ \ \ \ \ \ \ \ \ \ \ \ \ \
\ \ \ \ \ \ \ \ \ \ \ \ \ \ \ \ \ \ \ \ \ \ \ \ \ \ \ \ \ \ \ \ \ \ \ \ \ \
\ \ \ \ \ \ \ \ \ \ \ \ \ \ \ \ \ \ \ \ \ \ 
\end{lem}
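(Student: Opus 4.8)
The plan is to obtain the closed form $L_{n+1}^{\sim }=A_{1}^{n+1}L_{0}^{\sim }$ by iterating the single-step recurrence established in the preceding lemma, and then to compute the matrix $L_{0}^{\sim }$ explicitly from the boundary conditions of $(2.2)$. Since the recurrence $L_{n+1}^{\sim }=A_{1}L_{n}^{\sim }$ is already in hand, the first part is a routine induction; the genuine content lies in reading off the entries of $L_{0}^{\sim }$ and checking that the three-case boundary data in $(2.2)$ produces exactly the displayed matrix.

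First I would argue by induction on $n$. The base case $L_{1}^{\sim }=A_{1}L_{0}^{\sim }=A_{1}^{1}L_{0}^{\sim }$ is immediate from the preceding lemma applied with $n=0$. For the inductive step, assuming $L_{n}^{\sim }=A_{1}^{n}L_{0}^{\sim }$, the preceding lemma gives $L_{n+1}^{\sim }=A_{1}L_{n}^{\sim }=A_{1}\left( A_{1}^{n}L_{0}^{\sim }\right) =A_{1}^{n+1}L_{0}^{\sim }$, which completes the induction.

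Next I would determine $L_{0}^{\sim }$ directly from definition $(2.4)$. Setting $n=0$ there, the $(r,i)$ entry of $L_{0}^{\sim }$ (with $r$ the row index, $1\leq r\leq k$, and $i$ the column index) equals $l_{k,\,1-r}^{\,i}$, so the relevant value of the second index is $n=1-r$, which lies in the admissible range $1-k\leq n\leq 0$. To apply the piecewise formula I compute the discriminating quantity $i-n=i+r-1$. The three cases $i-n<k$, $i-n=k$, $i-n>k$ then translate into $i+r\leq k$, $i+r=k+1$, and $i+r\geq k+2$, yielding entries $-i$, $-2(1-r)+i=i+2r-2$, and $k-i-1$, respectively.

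The main (though still elementary) obstacle is keeping this case analysis straight across all rows, and I would confirm it on the extreme rows. For $r=1$ the condition $i+r\leq k$ holds for $1\leq i\leq k-1$, giving $-1,-2,\ldots ,-(k-1)$, while $i=k$ falls in the equality case $i+r=k+1$ and gives $i+2r-2=k$; this reproduces the first row $(-1,-2,\ldots ,-(k-1),k)$. For $r=k$ only $i=1$ meets $i+r=k+1$, giving $i+2r-2=2k-1$, and every $i\geq 2$ satisfies $i+r\geq k+2$, giving $k-i-1$, so the last row reads $(2k-1,k-3,k-4,\ldots ,1,0,-1)$. Carrying out the same bookkeeping for the intermediate rows matches the remaining entries of the displayed $L_{0}^{\sim }$, completing the proof.
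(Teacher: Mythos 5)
Your proof is correct and takes essentially the same route as the paper: both obtain $L_{n+1}^{\sim}=A_{1}^{n+1}L_{0}^{\sim}$ by a routine induction on the one-step recurrence $L_{n+1}^{\sim}=A_{1}L_{n}^{\sim}$ from the preceding lemma. The only difference is that you additionally verify the displayed entries of $L_{0}^{\sim}$ against the boundary conditions of $(2.2)$ (correctly, via the index bookkeeping $i-n=i+r-1$), a detail the paper's proof simply takes as given.
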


\begin{proof}
It is clear that $L_{1}^{\sim }=A_{1}L_{0}^{\sim }$ and $L_{n+1}^{\sim
}=A_{1}L_{n}^{\sim }.$ So by induction and properties of matrix
multiplication, we have $L_{n+1}^{\sim }=A^{n+1}L_{0}^{\sim }.$\bigskip
\end{proof}

\begin{lem}
Let $F_{n}^{\sim }$ and $L_{n}^{\sim }$ be as in (1.3) and (2.4),
respectively. Then%
\begin{equation*}
L_{n}^{\sim }=F_{n}^{\sim }L_{0}^{\sim }.
\end{equation*}
\end{lem}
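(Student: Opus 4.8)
The plan is to reduce everything to the single observation that the Fibonacci companion matrix and the Lucas generating matrix coincide once all coefficients are set equal to one. Concretely, the matrix $A$ appearing in (1.3) has first row $(c_{1},c_{2},\ldots ,c_{k})$ with a shifted $(k-1)\times (k-1)$ identity block beneath it; specializing $c_{j}=1$ for $1\leq j\leq k$ turns its first row into a row of ones, which is exactly the matrix $A_{1}$ of (2.3). Thus $A=A_{1}$ in the regime under study, and this identification is the linchpin of the whole argument.

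Next I would invoke the two structural facts already recorded in the excerpt. On the Fibonacci side, Karaduman's result gives $F_{n}^{\sim }=A^{n}$ for $c_{j}=1$ $(1\leq j\leq k)$. On the Lucas side, the preceding lemma gives $L_{n+1}^{\sim }=A_{1}^{\,n+1}L_{0}^{\sim }$, equivalently $L_{n}^{\sim }=A_{1}^{\,n}L_{0}^{\sim }$ for $n\geq 0$ (at $n=0$ this reads $L_{0}^{\sim }=I\,L_{0}^{\sim }$, which holds trivially). Since $A=A_{1}$, I may substitute $A_{1}^{\,n}=A^{n}=F_{n}^{\sim }$ into this expression.

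Combining these gives
\[
L_{n}^{\sim }=A_{1}^{\,n}L_{0}^{\sim }=A^{n}L_{0}^{\sim }=F_{n}^{\sim }L_{0}^{\sim },
\]
which is the asserted identity. The derivation is therefore a matching of two matrix definitions followed by a substitution, and it requires no fresh induction beyond what the earlier lemmas already supply.

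I expect the only delicate point to be the index bookkeeping: one must confirm that the power of $A_{1}$ attached to $L_{0}^{\sim }$ in the previous lemma lines up with the exponent $n$ in $F_{n}^{\sim }=A^{n}$, so that the factor multiplying $L_{0}^{\sim }$ is precisely the $n$-th power and not an off-by-one shift. Verifying this alignment, together with checking that the displayed $L_{0}^{\sim }$ genuinely is the $n=0$ block of $D_{(k)}^{\infty }$ for $t_{i}=1$, is the single place where a slip could occur; the algebra itself is immediate.
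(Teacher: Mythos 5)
Your argument is correct and is essentially identical to the paper's: the authors likewise obtain the result by combining $F_{n}^{\sim }=A_{1}^{n}$ (citing Karaduman for the $c_{j}=1$ specialization) with Lemma 2.3, so that $L_{n}^{\sim }=A_{1}^{n}L_{0}^{\sim }=F_{n}^{\sim }L_{0}^{\sim }$. Your extra care about the identification $A=A_{1}$ and the index alignment only makes explicit what the paper leaves implicit.
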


\begin{proof}
Proof is trivial from $F_{n}^{\sim }=A_{1}^{n}($see $\left[ 4\right] )$ and
Lemma 2.3.
\end{proof}

\begin{exam}
From Lemma 2.4 for $k=2$, we have%
\begin{equation*}
\left[ 
\begin{array}{cc}
l_{2,n}^{1} & l_{2,n}^{2} \\ 
l_{2,n-1}^{1} & l_{2,n-1}^{2}%
\end{array}%
\right] =\left[ 
\begin{array}{cc}
f_{2,n}^{1} & f_{2,n}^{2} \\ 
f_{2,n-1}^{1} & f_{2,n-1}^{2}%
\end{array}%
\right] \left[ 
\begin{array}{cc}
-1 & 2 \\ 
3 & -1%
\end{array}%
\right] .\ 
\end{equation*}%
Therefore, $l_{2,n}^{2}=2f_{2,n}^{1}-f_{2,n}^{2}$. Since $%
f_{2,n}^{1}=f_{2,n+1}^{2}$ for all $n\in 
\mathbb{Z}
,$ then we have%
\begin{equation*}
l_{2,n}^{2}=2f_{2,n+1}^{2}-f_{2,n}^{2}
\end{equation*}%
where $l_{2,n}^{2}$ and $f_{2,n}^{2}$ are ordinary Lucas and Fibonacci
numbers, respectively.\linebreak\ \ \ For $k=3$, we have%
\begin{equation*}
\left[ 
\begin{array}{ccc}
l_{3,n}^{1} & l_{3,n}^{2} & l_{3,n}^{3} \\ 
l_{3,n-1}^{1} & l_{3,n-1}^{2} & l_{3,n-1}^{3} \\ 
l_{3,n-2}^{1} & l_{3,n-2}^{2} & l_{3,n-2}^{3}%
\end{array}%
\right] =\left[ 
\begin{array}{ccc}
f_{3,n}^{1} & f_{3,n}^{2} & f_{3,n}^{3} \\ 
f_{3,n-1}^{1} & f_{3,n-1}^{2} & f_{3,n-1}^{3} \\ 
f_{3,n-2}^{1} & f_{3,n-2}^{2} & f_{3,n-2}^{3}%
\end{array}%
\right] \left[ 
\begin{array}{ccc}
-1 & -2 & 3 \\ 
-1 & 4 & -1 \\ 
5 & 0 & -1%
\end{array}%
\right] .
\end{equation*}%
Therefore, $l_{3,n}^{3}=3f_{3,n}^{1}-f_{3,n}^{2}-f_{3,n}^{3}.$ Since for
k=3, $f_{3,n}^{1}=f_{3,n+1}^{3}$ and \newline
$f_{3,n}^{2}=f_{3,n-1}^{1}+f_{3,n-1}^{3}=f_{3,n}^{3}+f_{3,n-1}^{3}$ for all $%
n\in 
\mathbb{Z}
,$ we have%
\begin{equation*}
l_{3,n}^{3}=3f_{3,n+1}^{3}-2f_{3,n}^{3}-f_{3,n-1}^{3}.
\end{equation*}
\end{exam}

\begin{thrm}
For $i=k$, $n\geq 0$ and $c_{1}=\cdots =c_{k}=1,$%
\begin{equation}
\ l_{k,n}^{k}=kf_{k,n+1}^{k}-(k-1)f_{k,n}^{k}-\cdots
-f_{k,n-k+2}^{k}=kf_{k,n+1}^{k}-\sum\limits_{j=2}^{k}(k-j+1)f_{k,n+2-j}^{k}
\end{equation}%
where $l_{k,n}^{i}$ and $f_{k,n}^{\text{ }i}$ $k$SO$k$L and $k$SO$k$F,
respectively.\bigskip
\end{thrm}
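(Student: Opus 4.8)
The plan is to obtain the identity from the factorization $L_n^{\sim}=F_n^{\sim}L_0^{\sim}$ of Lemma 2.4, and then to remove the auxiliary columns $f_{k,n}^{1},\dots,f_{k,n}^{k-1}$ in favour of the single sequence $f_{k,n}^{k}$.

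First I would equate the $(1,k)$ entries on the two sides of $L_n^{\sim}=F_n^{\sim}L_0^{\sim}$. The first row of $F_n^{\sim}$ is $\bigl(f_{k,n}^{1},\dots,f_{k,n}^{k}\bigr)$ and the last column of $L_0^{\sim}$ is $(k,-1,\dots,-1)^{T}$, so the product yields
\begin{equation*}
l_{k,n}^{k}=k\,f_{k,n}^{1}-\sum_{i=2}^{k}f_{k,n}^{i},
\end{equation*}
which already matches the opening lines of Example 2.5 for $k=2,3$. Everything after this is a rewriting of the right-hand side.

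Next I would derive the relations between the $k$SO$k$F columns. Since $c_{1}=\dots=c_{k}=1$ the Fibonacci companion matrix coincides with the matrix $A_{1}$ of (2.3), and $F_n^{\sim}=A_{1}^{\,n}$, so $F_{n+1}^{\sim}=F_n^{\sim}A_{1}$. Comparing first rows entry by entry, and reading off the columns of $A_{1}$, gives the single-step relations
\begin{equation*}
f_{k,n}^{1}=f_{k,n+1}^{k}\qquad\text{and}\qquad f_{k,n}^{i}=f_{k,n-1}^{i+1}+f_{k,n}^{k}\quad(1\le i\le k-1).
\end{equation*}
Iterating the second relation, each step raising the superscript by one and lowering the first index by one until the superscript reaches $k$, collapses it to the closed form
\begin{equation*}
f_{k,n}^{i}=\sum_{\ell=0}^{k-i}f_{k,n-\ell}^{k},\qquad 1\le i\le k.
\end{equation*}
I expect this telescoping to be the main obstacle: one must track the two simultaneous shifts (in $n$ and in the sequence index) without slipping, and it is exactly here that the hypothesis $c_{j}=1$ is used.

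Finally I would substitute back. Keeping the leading term as $k\,f_{k,n}^{1}=k\,f_{k,n+1}^{k}$ and expanding only the sum, $\sum_{i=2}^{k}f_{k,n}^{i}=\sum_{i=2}^{k}\sum_{\ell=0}^{k-i}f_{k,n-\ell}^{k}$, I would interchange the order of summation. The coefficient of a fixed $f_{k,n-\ell}^{k}$ is the number of admissible $i$, namely $k-\ell-1$ for $0\le\ell\le k-2$; re-indexing by $j=\ell+2$ turns this into $\sum_{j=2}^{k}(k-j+1)\,f_{k,n+2-j}^{k}$. Combined with the leading term this is precisely the claimed formula. As a sanity check the cases $k=2,3$ should reproduce Example 2.5, and as an alternative route one may note that both sides obey the order-$k$ recurrence in $n$, so the identity would also follow from verifying $k$ consecutive values --- but the reindexing above avoids computing any initial data.
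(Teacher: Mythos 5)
Your proof is correct, but it takes a genuinely different route from the paper. The paper proves Theorem 2.6 by induction on $n$: it checks the base case $n=0$ and then, in the inductive step, writes $l_{k,n+1}^{k}=l_{k,n}^{k}+\cdots+l_{k,n-k+1}^{k}$, substitutes the inductive hypothesis into each of the $k$ summands, and regroups using the recurrence for $f_{k,n}^{k}$ to produce the formula at $n+1$. You instead extract the identity algebraically from Lemma 2.4: the $(1,k)$ entry of $L_{n}^{\sim}=F_{n}^{\sim}L_{0}^{\sim}$ gives $l_{k,n}^{k}=k f_{k,n}^{1}-\sum_{i=2}^{k}f_{k,n}^{i}$, and the inter-column relations $f_{k,n}^{1}=f_{k,n+1}^{k}$ and $f_{k,n}^{i}=\sum_{\ell=0}^{k-i}f_{k,n-\ell}^{k}$ (which are exactly the identities the paper itself records later, in Subsection 2.1, as $f_{k,n}^{i}=\sum_{m=1}^{k-i+1}f_{k,n-m+1}^{k}$) let you eliminate the auxiliary columns; the interchange of summation and the coefficient count $k-\ell-1$ are carried out correctly. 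In effect you have generalized Example 2.5 from $k=2,3$ to arbitrary $k$, which is arguably the more transparent argument: it explains where the coefficients $k-j+1$ come from (the last column $(k,-1,\dots,-1)^{T}$ of $L_{0}^{\sim}$ convolved with the column relations), whereas the paper's induction only verifies them. What the paper's induction buys in exchange is self-containedness --- it needs only the two recurrences and the initial values, not Lemma 2.4 or the structure of $L_{0}^{\sim}$ --- though as written it checks a single base value for a depth-$k$ recurrence, silently relying on the boundary conditions for $1-k\le n\le 0$; your derivation sidesteps that issue entirely since it is not inductive.
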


\begin{proof}
\bigskip We use mathematical induction to prove the following equality%
\begin{equation*}
l_{k,n}^{k}=kf_{k,n+1}^{k}-\sum\limits_{j=2}^{k}(k-j+1)f_{k,n+2-j}^{k}.
\end{equation*}%
It is easy to obtain $l_{k,0}^{k}=k,$ $f_{k,0}^{k}=0$ and $f_{k,1}^{k}=1$
for all $k\in 
\mathbb{Z}
^{+}$ with $k\geq 2,$ from the definition of $k$SO$k$L and $k$SO$k$F. So,
the equation (2.5) is true for $n=0$, i.e.,%
\begin{equation*}
l_{k,0}^{k}=kf_{k,1}^{k}-(k-1)f_{k,0}^{k}-\cdots -f_{k,-k+2}^{k}=k.1+0=k.
\end{equation*}%
Suppose that the equation holds for all positive integers less than or equal
to $n$ i.e., for integer $n$%
\begin{equation*}
l_{k,n}^{k}=kf_{k,n+1}^{k}-\sum\limits_{j=2}^{k}(k-j+1)f_{k,n+2-j}^{k}
\end{equation*}%
then from (1.2) and (2.2), for $c_{1}=\cdots =c_{k}=1,$ we get;\bigskip 
\begin{eqnarray*}
l_{k,n+1}^{k} &=&l_{k,n}^{k}+l_{k,n-1}^{k}+l_{k,n-2}^{k}+\cdots
+l_{k,n-k+1}^{k} \\
&=&(kf_{k,n+1}^{k}-(k-1)f_{k,n}^{k}-\cdots -f_{k,n-k+2}^{k})+ \\
&&(kf_{k,n}^{k}-(k-1)f_{k,n-1}^{k}-\cdots -f_{k,n-k+1}^{k})+ \\
&&\ldots +(kf_{k,n-k+2}^{k}-(k-1)f_{k,n-k+1}^{k}-\cdots -f_{k,n-2k+3}^{k}) \\
&=&kf_{k,n+2}^{k}-(k-1)f_{k,n+1}^{k}-\cdots -f_{k,n-k+3}^{k} \\
&=&kf_{k,n+2}^{k}-\sum\limits_{j=2}^{k}(k-j+1)f_{k,n+3-j}^{k}\text{.}
\end{eqnarray*}%
\bigskip \bigskip So, the equation holds for $(n+1)$ and proof is complete.
\end{proof}

Since $f_{k,n}^{k}=f_{k,n+k-2}$ and $l_{k,n}^{k}=l_{k,n}$ the following
relation is obvious%
\begin{equation*}
l_{k,n}=kf_{k,n+k-1}-\sum\limits_{j=2}^{k}(k-j+1)f_{k,n+k-j}
\end{equation*}%
where $f_{k,n}$ is the $n$-th GO$k$F as in (1.1), $\ l_{k,n}$ is GO$k$L as
in (2.1) and $l_{k,n}^{k}$ is the $n$-th term of $k$-th sequences of the $k$%
SO$k$L as in (2.2).

The following theorem shows that equation (2.5) is valid for Generalized
Fibonacci and Lucas Polynomials as well.

\begin{thrm}
\bigskip For $k\geq 2$ and $n\geq 0,$%
\begin{equation*}
G_{k,n}(t)=kF_{k,n}(t)-\sum\limits_{j=2}^{k}(k-j+1)t_{j-1}F_{k,n+1-j}(t)
\end{equation*}%
where $F_{k,n}(t)$ and $G_{k,n}(t)$ are the Generalized Fibonacci and Lucas
Polynomials, respectively.
\end{thrm}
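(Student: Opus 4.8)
The plan is to adapt the inductive scheme used for Theorem 2.5, after first stripping the statement down to the algebraic fact that drives it. Abbreviate the right-hand side by $H_{n}:=kF_{k,n}(t)-\sum_{j=2}^{k}(k-j+1)t_{j-1}F_{k,n+1-j}(t)$, and reindex with $i=j-1$ to get the more convenient form $H_{n}=kF_{k,n}(t)-\sum_{i=1}^{k-1}(k-i)t_{i}F_{k,n-i}(t)$. The goal is to show $H_{n}=G_{k,n}(t)$ for every $n\ge 0$.

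First I would check that $H_{n}$ satisfies the same $t$-linear recursion as the Lucas polynomials, namely $H_{n}=\sum_{i=1}^{k}t_{i}H_{n-i}$ for $n\ge k$. Substituting the definition of $H$, interchanging the two summations, and applying the Fibonacci-polynomial recursion $F_{k,m}(t)=\sum_{i=1}^{k}t_{i}F_{k,m-i}(t)$ (valid for $m\ge 1$) once to the leading term and once inside the double sum collapses everything back to $H_{n}$; tracking the index ranges is what forces $n\ge k$. Because $G_{k,n}(t)$ obeys the identical recursion for $n\ge k$, the sequences $(H_{n})$ and $(G_{k,n}(t))$ coincide for all $n\ge 0$ as soon as they agree on the $k$ seed values $n=0,1,\dots ,k-1$.

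The proof therefore reduces to those base cases. For $0\le n\le k-1$ the Fibonacci terms of negative index vanish, so $H_{n}=kF_{k,n}(t)-\sum_{i=1}^{n}(k-i)t_{i}F_{k,n-i}(t)$. Reading off the explicit coefficients of $G_{k,n}(t)$ from $(1.10)$ --- equivalently, using the power-sum description $G_{k,n}(t)=\mathrm{tr}(A_{(k)}^{n})=\sum_{i=1}^{n}i\,t_{i}F_{k,n-i}(t)$, which one gets by comparing $\sum_{m\ge 0}F_{k,m}(t)x^{m}=(1-t_{1}x-\cdots -t_{k}x^{k})^{-1}$ with the logarithmic-derivative generating function of the power sums --- the difference simplifies to $H_{n}-G_{k,n}(t)=k\bigl(F_{k,n}(t)-\sum_{i=1}^{n}t_{i}F_{k,n-i}(t)\bigr)$, and the bracket is zero by the (truncated) Fibonacci recursion. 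The case $n=0$ is immediate from $F_{k,0}(t)=1$ and $G_{k,0}(t)=k$.

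I expect the base cases to be the only genuine difficulty, for a structural reason: the Lucas recursion $G_{k,n}(t)=\sum_{i=1}^{k}t_{i}G_{k,n-i}(t)$ fails for $1\le n\le k-1$ --- already $G_{k,1}(t)=t_{1}\ne kt_{1}$ --- so, unlike the classical $k=2$ Lucas situation, one cannot start the induction from a single value, and the correct initial data $G_{k,0},\dots ,G_{k,k-1}$ must be imported from $(1.10)$ (equivalently, Newton's identities) rather than generated by the recursion. Once this is in hand the entire theorem reduces to the single cancellation $kF_{k,n}(t)-\sum_{i=1}^{k-1}(k-i)t_{i}F_{k,n-i}(t)=\sum_{i=1}^{k}i\,t_{i}F_{k,n-i}(t)$, which is nothing more than the Fibonacci recursion $F_{k,n}(t)=\sum_{i=1}^{k}t_{i}F_{k,n-i}(t)$ rewritten.
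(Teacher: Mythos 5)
Your argument is correct, but it takes a genuinely different route from the paper, whose entire proof is the sentence ``Proof is by induction as Theorem 2.6'' --- that is, verify the identity at $n=0$ and then add up $k$ shifted copies of the inductive hypothesis, letting the defining recursions of $F_{k,n}(t)$ and $G_{k,n}(t)$ collapse both sides. You instead show (a) that the right-hand side $H_n$ satisfies the same $t$-linear recursion as $G_{k,n}(t)$ for $n\ge k$, and (b) that the $k$ seed values agree, settling (b) by importing the Newton-type identity $G_{k,n}(t)=\sum_{i=1}^{\min(n,k)} i\,t_i F_{k,n-i}(t)$ from the explicit expansion (1.10) (equivalently, from generating functions); this identity is exactly the polynomial form of the paper's Theorem 2.8. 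As you yourself observe at the end, once that identity is in hand the whole theorem is the single cancellation $kF_{k,n}-\sum_{i=1}^{k-1}(k-i)t_iF_{k,n-i}=\sum_{i=1}^{k}i\,t_iF_{k,n-i}$, i.e.\ $k$ times the Fibonacci recursion --- so step (a) is logically redundant and your proof is really a one-line deduction from Newton's identities. What your route buys: it isolates the only real content of the theorem, and it is more careful than the paper about initial data --- the Lucas recursion with zero-padding fails at $n=1$ (since $t_1G_{k,0}=kt_1\neq t_1=G_{k,1}$), so an induction anchored only at $n=0$, as the paper's template suggests, silently skips a base case that must be checked by hand. What the paper's route buys: it uses nothing beyond the two defining recursions, whereas you must actually justify $G_{k,n}(t)=\sum_i i\,t_iF_{k,n-i}(t)$ by comparing coefficients in (1.9)--(1.10), a step you only sketch. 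One small correction: that recursion fails only at $n=1$, not for all $1\le n\le k-1$; for $2\le n\le k-1$ it holds under the convention $G_{k,m}(t)=0$ for $m<0$.
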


\begin{proof}
Proof is by induction as Theorem 2.6.
\end{proof}

\begin{thrm}
For $i=k$ and $n\geq 0,$%
\begin{equation}
l_{k,n}^{k}=\sum\limits_{j=1}^{k}jf_{k,n+1-j}^{k}
\end{equation}%
where $l_{k,n}^{i}$ and $f_{k,n}^{\text{ }i}$ are the $k$SO$k$L and $k$SO$k$%
F respectively.
\end{thrm}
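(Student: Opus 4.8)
The plan is to obtain this second closed form as an immediate algebraic consequence of Theorem 2.6 together with the defining recurrence of $k$SO$k$F, rather than running a fresh induction. Since both formulas express the same quantity $l_{k,n}^{k}$, it suffices to show that the right-hand side of Theorem 2.6 equals $\sum_{j=1}^{k}jf_{k,n+1-j}^{k}$. I would therefore start from the conclusion of Theorem 2.6,
\[
l_{k,n}^{k}=kf_{k,n+1}^{k}-\sum_{j=2}^{k}(k-j+1)f_{k,n+2-j}^{k},
\]
and transform its right-hand side.

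The first step is to apply the order-$k$ recurrence $f_{k,n+1}^{k}=\sum_{j=1}^{k}f_{k,n+1-j}^{k}$, which holds for $n\geq 0$ when $c_{1}=\cdots=c_{k}=1$, in order to replace the leading term by $kf_{k,n+1}^{k}=\sum_{j=1}^{k}kf_{k,n+1-j}^{k}$. After this substitution every surviving term has the form $f_{k,n+1-j}^{k}$, so the whole expression can be read off as a single sum indexed by $j$. The second step is to reindex the subtracted sum: shifting $j\mapsto j+1$ (equivalently, rewriting the index $n+2-j$ as $n+1-(j-1)$) turns $\sum_{j=2}^{k}(k-j+1)f_{k,n+2-j}^{k}$ into $\sum_{j=1}^{k-1}(k-j)f_{k,n+1-j}^{k}$, now aligned with the expansion above.

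Subtracting term by term then gives the result: the coefficient of $f_{k,n+1-j}^{k}$ is $k-(k-j)=j$ for $1\leq j\leq k-1$, while the bottom term $f_{k,n+1-k}^{k}$ escapes cancellation and keeps coefficient $k$. Since $k=j$ when $j=k$, all coefficients collapse into the single pattern $j$, and the right-hand side becomes $\sum_{j=1}^{k}jf_{k,n+1-j}^{k}$, as claimed.

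The main obstacle is purely bookkeeping: the two sums have different ranges — the expansion of $kf_{k,n+1}^{k}$ produces $k$ terms (indices $n$ down to $n+1-k$) whereas the subtracted sum has only $k-1$ terms (indices $n$ down to $n-k+2$) — so one must line the indices up carefully and verify that exactly the last term survives with coefficient $k$. I would also confirm that the recurrence is applicable across the whole range $n\geq 0$: for $n\geq 0$ the argument $n+1\geq 1$ is positive, so $(1.2)$ applies, and even at the boundary $n=0$ the $k$SO$k$F initial values give $f_{k,1}^{k}=\sum_{j=1}^{k}f_{k,1-j}^{k}=1$, so no edge case breaks the substitution. A direct induction mirroring the proof of Theorem 2.6 is available as a fallback, but this reduction is shorter and reuses work already done.
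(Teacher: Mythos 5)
Your proof is correct, but it takes a different route from the paper. The paper disposes of this theorem with a one-line remark ("Proof is by induction as Theorem 2.6"), i.e., it runs a second, independent induction on $n$ that mirrors the earlier one: verify the base case $n=0$ and push the recurrence (2.2) through the inductive hypothesis. You instead treat the statement as an algebraic corollary of Theorem 2.6: expand $kf_{k,n+1}^{k}=\sum_{j=1}^{k}kf_{k,n+1-j}^{k}$ via the order-$k$ recurrence (valid since $n+1>0$), reindex the subtracted sum to $\sum_{j=1}^{k-1}(k-j)f_{k,n+1-j}^{k}$, and read off the coefficients $k-(k-j)=j$ for $j\leq k-1$ with the uncancelled bottom term carrying coefficient $k=j$ at $j=k$. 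I checked the index bookkeeping and the boundary case $n=0$ (where $f_{k,1}^{k}=f_{k,1-k}^{k}=1$ by the initial conditions of (1.2)), and everything lines up. Your reduction is shorter, avoids duplicating the induction, and makes transparent that the two closed forms in Theorems 2.6 and 2.8 are equivalent modulo one application of the defining recurrence; the paper's induction is self-contained and does not depend on Theorem 2.6 having been proved first, but repeats essentially the same telescoping computation. Either is acceptable; yours arguably better explains \emph{why} both formulas hold simultaneously.
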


\begin{proof}
Proof is by induction as Theorem 2.6.
\end{proof}

\begin{lem}
For\ $k\geq 2,i$-th sequences of $k$SO$k$L in terms of $k$-th sequences of $%
k $SO$k$L is 
\begin{equation}
l_{k,n}^{i}=\left\{ 
\begin{array}{l}
l_{k,n-1}^{k}\text{ }\ \ \ \ \ \ \ \ \ \ \ \text{if }i=1 \\ 
\sum\limits_{m=1}^{i}l_{k,n-m}^{k}\ \ \ \text{ \ if }1<i<k \\ 
l_{k,n}^{k}\text{ }\ \ \ \ \ \ \ \ \ \ \ \ \ \ \text{if }i=k%
\end{array}%
\right. .
\end{equation}
\end{lem}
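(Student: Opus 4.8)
The plan is to collapse the three branches of (2.7) into the single identity
\begin{equation*}
l_{k,n}^{i}=\sum_{m=1}^{i}l_{k,n-m}^{k}
\end{equation*}
and to prove it by induction on $n$, exactly in the style of Theorem 2.6. Before inducting I would record two observations that dispose of the endpoints $i=1$ and $i=k$. For $i=1$ the right-hand side is the single term $l_{k,n-1}^{k}$, which is the first branch of (2.7). For $i=k$, the recurrence (2.2) applied to the $k$-th sequence gives $\sum_{m=1}^{k}l_{k,n-m}^{k}=l_{k,n}^{k}$ whenever $n>0$, so the third branch is nothing but (2.2) rewritten. Hence it is enough to prove the displayed identity for $1\le i\le k-1$.

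For the inductive step I would assume the identity for all indices up to $n$ and compute, first applying (2.2) to the $i$-th sequence and then the induction hypothesis term by term:
\begin{equation*}
l_{k,n+1}^{i}=\sum_{j=1}^{k}l_{k,n+1-j}^{i}=\sum_{j=1}^{k}\sum_{m=1}^{i}l_{k,n+1-j-m}^{k}=\sum_{m=1}^{i}\left(\sum_{j=1}^{k}l_{k,(n+1-m)-j}^{k}\right)=\sum_{m=1}^{i}l_{k,n+1-m}^{k},
\end{equation*}
where the interchange of the two finite sums is routine and the final equality is again (2.2), applied now to the $k$-th sequence at the index $n+1-m$. This is the whole content of the step.

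For the base I would read the values directly off the boundary data. At $n=0$ with $1\le i\le k-1$ one has $i-n=i<k$, so $l_{k,0}^{i}=-i$; while for $1\le m\le i$ the index $-m$ lies in $[1-k,0]$ and satisfies $k-(-m)>k$, so $l_{k,-m}^{k}=-1$ and $\sum_{m=1}^{i}l_{k,-m}^{k}=-i$, matching. The remaining base values are checked the same way.

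The main obstacle is precisely this base verification. The telescoping identity above is only legitimate once every shifted index $n+1-j-m$ has reached the range where both the recurrence (2.2) for the $k$-th sequence and the induction hypothesis for the $i$-th sequence are valid, that is for $n+1\ge k$. For the small indices $n=0,1,\dots,k-1$ the shifts $n+1-j-m$ fall into the boundary block $1-k\le\,\cdot\,\le 0$, so the identity must be confirmed by hand from the piecewise boundary conditions of $l_{k,\cdot}^{i}$ and $l_{k,\cdot}^{k}$; equivalently, one compares the corresponding columns of the matrix $L_0^{\sim}$ of Lemma 2.3. Once these $k$ consecutive base cases are in place, the recurrence propagates the identity to every $n\ge 0$, which completes the argument.
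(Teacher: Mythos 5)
The paper states this lemma without any proof, so there is nothing to compare your argument against; I can only assess it on its own terms. Your overall strategy is sound: collapsing the three branches into $l_{k,n}^{i}=\sum_{m=1}^{i}l_{k,n-m}^{k}$ is legitimate (the $i=1$ branch is the one-term sum, the $i=k$ branch is just the recurrence (2.2) for the $k$-th sequence), and the inductive step --- apply (2.2) to the $i$-th sequence, invoke the hypothesis termwise, interchange the finite sums, and reassemble via (2.2) for the $k$-th sequence --- is correct once a window of $k$ consecutive base cases is in hand.

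The gap is in that window. You verify $n=0$ honestly from the boundary data, but the remaining base cases $n=1,\dots,k-1$ are \emph{not} ``checked the same way'': for $n\geq 1$ the values $l_{k,n}^{i}$ and $l_{k,n-m}^{k}$ are no longer boundary data but are produced by iterating the recurrence out of the boundary block, so each such check is itself a small computation that your write-up does not perform. Your proposed shortcut --- comparing columns of $L_{0}^{\sim}$ --- cannot close this, since $L_{0}^{\sim}$ only records the indices $1-k\leq n\leq 0$. The identity is in fact true on that range, but you have identified ``the main obstacle'' without actually surmounting it. A cleaner route avoids the window entirely: since the rows of $D_{(k)}^{\infty}$ form an orbit under right multiplication by $A_{(k)}$, at $t_{s}=1$ consecutive rows satisfy
\begin{equation*}
l_{k,n+1}^{1}=l_{k,n}^{k},\qquad l_{k,n+1}^{j}=l_{k,n}^{j-1}+l_{k,n}^{k}\quad(2\leq j\leq k),
\end{equation*}
and a one-line induction on $i$ then gives $l_{k,n}^{i}=l_{k,n-1}^{i-1}+l_{k,n-1}^{k}=\sum_{m=1}^{i-1}l_{k,n-1-m}^{k}+l_{k,n-1}^{k}=\sum_{m=1}^{i}l_{k,n-m}^{k}$ uniformly in $n$, with no base-case casework on the boundary block at all.
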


\bigskip

\begin{thrm}
$i$-th sequences of $k$SO$k$L can be written in terms of $k$-th sequences of 
$k$SO$k$F (which is GO$k$F with index iteration) in different ways;
\end{thrm}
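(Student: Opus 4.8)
The plan is to reduce everything to the single $k$-th sequence $l_{k,n}^{k}$ and then feed in the two closed forms already available for it. Lemma 2.9 expresses each $i$-th sequence of $k$SO$k$L as a shifted sum of the $k$-th sequence, namely $l_{k,n}^{1}=l_{k,n-1}^{k}$, $l_{k,n}^{i}=\sum_{m=1}^{i}l_{k,n-m}^{k}$ for $1<i<k$, and $l_{k,n}^{k}=l_{k,n}^{k}$. On the other hand, Theorem 2.8 gives $l_{k,n}^{k}=\sum_{j=1}^{k}j\,f_{k,n+1-j}^{k}$ and Theorem 2.6 gives the alternative $l_{k,n}^{k}=kf_{k,n+1}^{k}-\sum_{j=2}^{k}(k-j+1)f_{k,n+2-j}^{k}$. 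The two ``different ways'' promised by the statement correspond precisely to which of these master formulas I substitute; so the whole theorem follows by inserting one of them into Lemma 2.9 and collecting like terms.

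First I would split into the three cases of Lemma 2.9. The case $i=k$ is immediate, being Theorem 2.8 or Theorem 2.6 verbatim. For $i=1$ I would substitute $n-1$ for $n$ in each master formula, obtaining $l_{k,n}^{1}=\sum_{j=1}^{k}j\,f_{k,n-j}^{k}$ from Theorem 2.8 and the analogous shifted version from Theorem 2.6. The substantive case is $1<i<k$: here I substitute Theorem 2.8 to get the double sum $l_{k,n}^{i}=\sum_{m=1}^{i}\sum_{j=1}^{k}j\,f_{k,n-m+1-j}^{k}$, then reindex by the single variable $p=m+j$ so that the coefficient of $f_{k,n+1-p}^{k}$ becomes $\sum_{m+j=p,\,1\le m\le i,\,1\le j\le k} j=\sum_{m=\max(1,p-k)}^{\min(i,p-1)}(p-m)$. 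Collecting the weights $(k-j+1)$ from Theorem 2.6 in the same fashion yields the second family of formulas.

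The main obstacle I expect is purely the bookkeeping of this collapse: the admissible range of $(m,j)$ pairs, and hence the coefficient of each $f_{k,n+1-p}^{k}$, changes as $p$ crosses the thresholds $p\le i$, $i<p\le k$, and $p>k$, so the final closed form is genuinely piecewise in $p$. To pin down the boundary coefficients cleanly I would first check the reindexing against a small instance such as $k=3,\ i=2$, matching the computation in Example 2.5, and then confirm the general pattern by induction on $n$ exactly as in Theorem 2.6. The induction step is automatic once the formula is verified on $k$ consecutive values of $n$, because both sides are finite linear combinations of the $f_{k,\cdot}^{k}$ obeying the common order-$k$ recurrence $l_{k,n+1}^{i}=\sum_{s=1}^{k}l_{k,n+1-s}^{i}$, so the relation propagates to $n+1$ term by term.
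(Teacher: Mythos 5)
Your proposal is correct and follows essentially the same route as the paper, whose entire proof is the citation ``from Theorem 2.6/2.8 and Lemma 2.9''; you simply carry out the substitution and, for part (i), the reindexing $p=m+j$ that the paper leaves implicit (your piecewise coefficient $\sum_{m=\max(1,p-k)}^{\min(i,p-1)}(p-m)$ does reproduce the stated $d_j$). The closing appeal to induction on $n$ is unnecessary since the substitution is already an identity, but it does no harm.
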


$i)$For $k\geq 3$ and $1\leq i\leq k$ 
\begin{equation*}
l_{k,n}^{i}=\sum\limits_{j=1}^{k+i-1}d_{j}f_{k,n-j}^{k}\text{ }
\end{equation*}%
where $1\leq i\leq k$, $n\geq 0$ and constant coefficient 
\begin{equation*}
d_{j}=\left\{ 
\begin{array}{l}
\frac{j(j+1)}{2}\text{ }\ \ \ \ \ \ \ \ \ \ \ \ \ \ \ \ \ \ \ \ \ \ \ \ \ \ 
\text{if }1\leq j\leq i \\ 
\frac{j(j+1)}{2}-\frac{(j-i)(j-i+1)}{2}\text{ \ \ \ \ \ \ \ \ if }i+1\leq
j\leq k-1 \\ 
\frac{k(k+1)}{2}-\frac{(j-i)(j-i+1)}{2}\ \ \ \ \ \ \ \ \text{if }k\leq j\leq
k+i-1%
\end{array}%
\right.
\end{equation*}

$ii)$For $k\geq 2$ and $1\leq i\leq k$%
\begin{equation*}
l_{k,n}^{i}=\left\{ 
\begin{array}{l}
kf_{k,n}^{k}-\sum\limits_{j=2}^{k}(k-j+1)f_{k,n+1-j}^{k}\text{ \ \ \ \ \ \ \
\ \ \ \ \ \ \ \ \ \ \ \ \ \ \ \ \ \ if }i=1 \\ 
\sum\limits_{m=1}^{i}kf_{k,n-m+1}^{k}-\sum\limits_{m=1}^{i}\sum%
\limits_{j=2}^{k}(k-j+1)f_{k,n-m-j+2}^{k}\text{ \ \ \ if }1<i<k \\ 
kf_{k,n+1}^{k}-\sum\limits_{j=2}^{k}(k-j+1)f_{k,n+2-j}^{k}\text{ }\ \ \ \ \
\ \ \ \ \ \ \ \ \ \ \ \ \ \ \ \ \ \text{if }i=k%
\end{array}%
\right.
\end{equation*}

$iii)$For $k\geq 2$ and $1\leq i\leq k$%
\begin{equation*}
l_{k,n}^{i}=\left\{ 
\begin{array}{l}
\sum\limits_{j=1}^{k}jf_{k,n-j}^{k}\text{ \ \ \ \ \ \ \ \ \ \ \ \ \ \ \ if }%
i=1 \\ 
\sum\limits_{m=1}^{i}\sum\limits_{j=1}^{k}jf_{k,n-m-j+1}^{k}\text{ \ \ if }%
1<i<k \\ 
\sum\limits_{j=1}^{k}jf_{k,n+1-j}^{k}\text{ \ \ \ \ \ \ \ \ \ \ \ \ if }i=k%
\end{array}%
\right..
\end{equation*}

\begin{proof}
$i)$Proof is from Theorem 2.8 and Lemma 2.9.

$ii)$Proof is from Theorem 2.6 and Lemma 2.9.

$iii)$Proof is from Theorem 2.8 and Lemma 2.9.
\end{proof}

\begin{exam}
Let us obtain $l_{k,n}^{i}$ for $k=4$, $n=4$ and $i=3$ by using Theorem
(2.10 $iii$).%
\begin{eqnarray*}
l_{4,4}^{3}
&=&\sum\limits_{m=1}^{3}\sum\limits_{j=1}^{4}j.f_{4,4-m-j+1}^{4}=\sum%
\limits_{m=1}^{3}(f_{4,4-m}^{4}+2f_{4,3-m}^{4}+3f_{4,2-m}^{4}+4f_{4,1-m}^{4})
\\
&=&f_{4,3}^{4}+2f_{4,2}^{4}+3f_{4,1}^{4}+4f_{4,0}^{4}+f_{4,2}^{4}+2f_{4,1}^{4}+f_{4,1}^{4}=11
\end{eqnarray*}%
since $f_{4,0}^{4}=0,$ $f_{4,1}^{4}=f_{4,2}^{4}=1$ and $f_{4,3}^{4}=2$.
\end{exam}

\begin{thrm}
Let $l_{k,n}^{i}$ and $f_{k,n}^{i}$ be the $k$SO$k$L and $k$SO$k$F,
respectively. Then, for $m,n\in 
\mathbb{Z}
$ and $1\leq i\leq k-1$,%
\begin{equation*}
l_{n+m}^{i}=\sum_{j=1}^{i}(l_{m-j}^{k}\sum_{s=1}^{j}f_{n}^{s})+%
\sum_{j=i+1}^{k}(l_{m-j}^{k}\sum_{s=j-i+1}^{j}f_{n}^{s})+%
\sum_{j=k+1}^{k+i-1}(l_{m-j}^{k}\sum_{s=j-i+1}^{k}f_{n}^{s})
\end{equation*}%
where we assume that, the sum is equal to zero, if the subscript is greater
than the superscript in the sum.
\end{thrm}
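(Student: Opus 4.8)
The plan is to reduce the whole statement to one matrix identity followed by a re-indexation of a double sum, with no induction required. First I would establish the product rule
\[
L_{n+m}^{\sim}=F_{n}^{\sim}L_{m}^{\sim},\qquad m,n\in\mathbb{Z}.
\]
This is immediate from what is already available: by Lemma 2.3 we have $L_{m}^{\sim}=A_{1}^{m}L_{0}^{\sim}$, and since $F_{n}^{\sim}=A_{1}^{n}$ (the fact from [4] used in the proof of Lemma 2.4), we get $L_{n+m}^{\sim}=A_{1}^{n+m}L_{0}^{\sim}=A_{1}^{n}(A_{1}^{m}L_{0}^{\sim})=F_{n}^{\sim}L_{m}^{\sim}$; the invertibility of $A_{1}$ (its determinant is $\pm1$) is what allows $m,n$ to range over all of $\mathbb{Z}$. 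Reading off the $(1,k)$ entry of both sides, and using the definition (2.4) of the blocks (so that the $(1,p)$ entry of $F_{n}^{\sim}$ is $f_{k,n}^{p}$ and the $(p,k)$ entry of $L_{M}^{\sim}$ is $l_{k,M-p+1}^{k}$), gives the basic addition formula for the $k$-th sequence,
\[
l_{k,n+M}^{k}=\sum_{p=1}^{k}f_{k,n}^{p}\,l_{k,M-p+1}^{k},
\]
valid for every integer $M$.

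Next I would bring in Lemma 2.9 to rewrite the left-hand side in terms of the $k$-th sequence. For $1\le i\le k-1$ both relevant branches of (2.7) collapse to $l_{k,n+m}^{i}=\sum_{s=1}^{i}l_{k,n+(m-s)}^{k}$, and applying the addition formula above with $M=m-s$ to each summand yields the double sum
\[
l_{k,n+m}^{i}=\sum_{s=1}^{i}\sum_{p=1}^{k}f_{k,n}^{p}\,l_{k,m-(s+p-1)}^{k}.
\]
The entire remaining content of the theorem is to collect the terms of this double sum according to the index $j=s+p-1$ of the factor $l_{k,m-j}^{k}$.

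The main (and essentially only) obstacle is the bookkeeping of this re-indexation. Setting $j=s+p-1$, the pair $(s,p)$ with $1\le s\le i$, $1\le p\le k$ makes $j$ run from $1$ to $k+i-1$, and for fixed $j$ the coefficient of $l_{k,m-j}^{k}$ is the sum of $f_{k,n}^{p}$ over the admissible $p$. Writing $p=j-s+1$ and imposing $1\le s\le i$ together with $1\le p\le k$, a one-line inequality check shows that $p$ runs over $\max(1,\,j-i+1)\le p\le\min(j,\,k)$. The three cases of the statement are exactly the three regimes of these $\max/\min$ boundaries: for $1\le j\le i$ the bounds are $1\le p\le j$ (here $i\le k-1$ forces $j<k$); for $i+1\le j\le k$ they are $j-i+1\le p\le j$; and for $k+1\le j\le k+i-1$ they are $j-i+1\le p\le k$. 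Renaming the summation index $p$ as $s$ to match the displayed notation then reproduces the three sums verbatim, and the stated convention that a sum vanishes when its lower index exceeds its upper index absorbs the degenerate ranges (for instance the empty third block when $i=1$). Once the product rule $L_{n+m}^{\sim}=F_{n}^{\sim}L_{m}^{\sim}$ is in place, everything else is a finite calculation, so I would finish by simply verifying these three boundary inequalities.
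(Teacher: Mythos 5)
Your proposal is correct and follows essentially the same route as the paper: both rest on the identity $L_{n+m}^{\sim}=F_{n}^{\sim}L_{m}^{\sim}$ (obtained from Lemma 2.4 and $F_{n}^{\sim}=A_{1}^{n}$) combined with Lemma 2.9, and both end with the same collection of terms indexed by $j=s+p-1$. The only cosmetic difference is that you apply Lemma 2.9 to the left-hand side and read off the $(1,k)$ entry, whereas the paper reads off the $(1,i)$ entry and expands the factors $l_{k,m-p+1}^{i}$ on the right; the resulting double sum and re-indexation are identical.
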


\begin{proof}
\bigskip We know that $L_{n}^{\sim }=F_{n}^{\sim }L_{0}^{\sim }$ (Lemma
2.4), so we can write that%
\begin{equation*}
L_{n+m}^{\sim }=F_{n+m}^{\sim }L_{0}^{\sim }=A_{1}^{n+m}L_{0}^{\sim
}=A_{1}^{n}A_{1}^{m}L_{0}^{\sim }=A_{1}^{n}L_{m}^{\sim }=F_{n}^{\sim
}L_{m}^{\sim }.
\end{equation*}
From this matrix product and Lemma 2.9 we obtain 
\begin{eqnarray*}
l_{k,n+m}^{i} &=&f_{k,n}^{1}l_{k,m}^{i}+\cdots +f_{k,n}^{k}l_{k,m-k+1}^{i} \\
&=&f_{k,n}^{1}(l_{k,m-1}^{k}+\cdots +l_{k,m-i}^{k})+\cdots
+f_{k,n}^{k}(l_{k,m-k}^{k}+\cdots +l_{k,m-k-i-1}^{k}) \\
&=&l_{k,m-1}^{k}f_{n}^{1}+l_{k,m-2}^{k}(f_{k,n}^{1}+f_{k,n}^{2})+\cdots
l_{k,m-i}^{k}(f_{k,n}^{1}+f_{k,n}^{2}+\cdots +f_{k,n}^{\text{ }i})+ \\
&&l_{k,m-i-1}^{k}(f_{k,n}^{2}+f_{k,n}^{3}+\cdots +f_{k,n}^{\text{ }%
i+1})+\cdots +l_{k,m-k}^{k}(f_{k,n}^{k-i+1}+\cdots +f_{k,n}^{k})+ \\
&&l_{k,m-k-1}^{k}(f_{k,n}^{k-i+2}+\cdots +f_{k,n}^{k})+\cdots
+l_{k,m-k-i-1}^{k}f_{k,n}^{k} \\
&=&\sum_{j=1}^{i}(l_{k,m-j}^{k}\sum_{t=1}^{j}f_{k,n}^{t})+%
\sum_{j=i+1}^{k}(l_{k,m-j}^{k}\sum_{t=j-i+1}^{j}f_{k,n}^{t})+%
\sum_{j=k+1}^{k+i-1}(l_{k,m-j}^{k}\sum_{t=j-i+1}^{k}f_{k,n}^{t}).
\end{eqnarray*}

\begin{exam}
Let us obtain $l_{k,n+m}^{i}$ for $k=5$, $i=3$, $n=3$ and $m=4$ by using
Theorem 2.12;%
\begin{eqnarray*}
l_{5,3+4}^{3}
&=&l_{7}^{3}=\sum_{j=1}^{3}(l_{5,4-j}^{5}\sum_{t=1}^{j}f_{5,3}^{t})+%
\sum_{j=4}^{5}(l_{5,4-j}^{5}\sum_{t=j-2}^{j}f_{5,3}^{t})+%
\sum_{j=6}^{7}(l_{5,4-j}^{5}\sum_{t=j-2}^{5}f_{5,3}^{t}) \\
&=&l_{5,3}^{5}f_{5,3}^{1}+l_{5,2}^{5}(f_{5,3}^{1}+f_{5,3}^{2})+l_{5,1}^{5}(f_{5,3}^{1}+f_{5,3}^{2}+f_{5,3}^{3})+l_{5,0}^{5}(f_{5,3}^{2}+f_{5,3}^{3}+f_{5,3}^{4})
\\
&&+l_{5,-1}^{5}(f_{5,3}^{3}+f_{5,3}^{4}+f_{5,3}^{5})+l_{5,-2}^{5}(f_{5,3}^{4}+f_{5,3}^{5})+l_{5,-3}^{5}f_{5,3}^{5}
\\
&=&28+24+12+55-9-5-2=103.
\end{eqnarray*}
\end{exam}
\end{proof}

\subsubsection{\protect\bigskip Binet Formula}

We have the following corollary by (1.5) and (Theorem 2.10 $iii$).

\begin{cor}
For $1\leq i\leq k$ and $m,n\in 
\mathbb{Z}
^{+},$%
\begin{equation*}
l_{k,n}^{i}=\left\{ 
\begin{array}{l}
\sum\limits_{j=1}^{k}j\sum\limits_{i=1}^{k}\frac{(\lambda _{i})^{n-j}}{P%
{\acute{}}%
(\lambda _{i})}\text{ \ \ \ \ \ \ \ \ \ \ \ \ \ \ \ \ \ \ for }i=1 \\ 
\sum\limits_{m=1}^{i}\sum\limits_{j=1}^{k}j\sum\limits_{i=1}^{k}\frac{%
(\lambda _{i})^{n-m-j+1}}{P%
{\acute{}}%
(\lambda _{i})}\text{ for }1<i<k \\ 
\sum\limits_{j=1}^{k}j\sum\limits_{i=1}^{k}\frac{(\lambda _{i})^{n-j+1}}{P%
{\acute{}}%
(\lambda _{i})}\text{ \ \ \ \ \ \ \ \ \ \ \ \ \ \ \ \ \ for }i=k%
\end{array}%
\right. .
\end{equation*}
where $l_{k,n}^{i}$ is the $k$SO$k$L.
\end{cor}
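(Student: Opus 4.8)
The plan is to treat this purely as a substitution exercise, feeding Kalman's Binet formula (1.5) into the three cases of Theorem 2.10 $(iii)$. Theorem 2.10 $(iii)$ already expresses each $l_{k,n}^{i}$ as a finite $\mathbb{Z}$-linear combination of the $k$-th sequence terms $f_{k,\cdot}^{k}$, so the only work left is to replace every occurrence of $f_{k,r}^{k}$ by its closed form and then rearrange the resulting finite sums.

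First I would recall from (1.5) that, when $c_{1}=\cdots=c_{k}=1$ and $\lambda_{1},\dots,\lambda_{k}$ are the (distinct) roots of the core polynomial (1.4),
\begin{equation*}
f_{k,r}^{k}=\sum_{s=1}^{k}\frac{(\lambda_{s})^{r}}{P'(\lambda_{s})},
\end{equation*}
a formula valid for every index $r$ to which the recurrence extends the sequence. I deliberately write the root index as $s$ rather than $i$, since $i$ is already reserved for the sequence label $1\le i\le k$; the statement of the corollary overloads $i$ in both roles, and this transcription point is the only place where care is needed.

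Next I would run the substitution case by case. For $i=1$, Theorem 2.10 $(iii)$ gives $l_{k,n}^{1}=\sum_{j=1}^{k}j\,f_{k,n-j}^{k}$; putting $r=n-j$ in the displayed formula yields the first line. For $i=k$, the same theorem gives $l_{k,n}^{k}=\sum_{j=1}^{k}j\,f_{k,n+1-j}^{k}$, and taking $r=n+1-j$ produces the third line. For the middle range $1<i<k$, Theorem 2.10 $(iii)$ gives $l_{k,n}^{i}=\sum_{m=1}^{i}\sum_{j=1}^{k}j\,f_{k,n-m-j+1}^{k}$, and setting $r=n-m-j+1$ gives the second line. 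Since all the sums involved are finite, they may be interchanged freely, so the double (or triple) summation can be written in the order displayed in the corollary.

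The argument carries no real obstacle beyond bookkeeping: the entire content is already packaged in (1.5) and Theorem 2.10 $(iii)$. The only genuine hypotheses to keep track of are that the roots $\lambda_{s}$ of (1.4) be simple, so that (1.5) applies, and that the shifted indices $n-j$, $n+1-j$, and $n-m-j+1$ remain within the range over which (1.5) holds; since the underlying $t$-linear recursion extends $f_{k,\cdot}^{k}$ to all integers, the restriction $m,n\in\mathbb{Z}^{+}$ is more than enough to guarantee this.
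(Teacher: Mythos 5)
Your proposal is correct and is exactly the route the paper takes: the corollary is stated there as an immediate consequence of substituting Kalman's Binet formula (1.5) into the three cases of Theorem 2.10~$(iii)$, with no further argument given. Your remark about renaming the root index (since $i$ is overloaded in the corollary's statement) is a fair observation about the notation but does not change the substance.
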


\bigskip We have the following corollary by (1.7) and (Theorem 2.10 $iii$).

\begin{cor}
Let $l_{k,n}^{i}$ be the $k$SO$k$L. Then, for $1\leq i\leq k$ and $m,n\in 
\mathbb{Z}
^{+},$%
\begin{equation*}
l_{k,n}^{i}=\left\{ 
\begin{array}{l}
\sum\limits_{j=1}^{k}j\frac{\text{det}(V_{k,n-j}^{(1)})}{\text{det(}V\text{)}%
}\text{ \ \ \ \ \ \ \ \ \ \ \ \ \ \ \ \ \ \ for }i=1 \\ 
\sum\limits_{m=1}^{i}\sum\limits_{j=1}^{k}j\frac{\text{det}%
(V_{k,n-m-j+1}^{(1)})}{\text{det(}V\text{)}}\text{ \ for }1<i<k \\ 
\sum\limits_{j=1}^{k}j\frac{\text{det}(V_{k,n-j+1}^{(1)})}{\text{det(}V\text{%
)}}\text{ \ \ \ \ \ \ \ \ \ \ \ \ \ \ \ \ \ for }i=k%
\end{array}%
\right.
\end{equation*}%
where $V_{k,n-s}^{(1)}$ is a new notation for (1.6) which depends on n,
i.e., $V_{k,n-s}^{(1)}$ is a $k\times k$ matrix obtained from $V$ by
replacing $k$-th column of $V$ by%
\begin{equation*}
d_{k,n-s}^{(1)}=\left[ 
\begin{array}{c}
\lambda _{1}^{k-1+n-s} \\ 
\lambda _{2}^{k-1+n-s} \\ 
\vdots \\ 
\lambda _{k}^{k-1+n-s}%
\end{array}%
\right] .
\end{equation*}
\end{cor}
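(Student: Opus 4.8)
The plan is to obtain the statement by direct substitution, reading Theorem 2.10(iii) together with the Vandermonde-determinant form of the Binet formula recorded in $(1.7)$. Theorem 2.10(iii) already expresses each $l_{k,n}^{i}$ as an explicit linear combination of shifted terms $f_{k,n-s}^{k}$: the shift is $s=j$ in the case $i=1$, it is $s=m+j-1$ in the case $1<i<k$, and it is $s=j-1$ in the case $i=k$. Consequently, once each individual term $f_{k,n-s}^{k}$ is rewritten as a quotient of Vandermonde determinants, the three branches of the corollary drop out immediately. So the entire mathematical content is already contained in Theorem 2.10(iii) and $(1.7)$; the corollary is essentially a transcription of the former with each Fibonacci term replaced by its determinantal expression.

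First I would establish the shifted Binet formula $f_{k,n-s}^{k}=\det(V_{k,n-s}^{(1)})/\det(V)$. Formula $(1.7)$ reads $f_{k,n}^{k}=\det(V_{k}^{(1)})/\det(V)$, where by $(1.6)$ the replacing column $d_{k}^{1}$ has entries $\lambda_{j}^{k-1+n}$. Since $(1.7)$ is valid for every admissible value of its index, replacing $n$ by $n-s$ replaces the column entries $\lambda_{j}^{k-1+n}$ by $\lambda_{j}^{k-1+n-s}$, which is precisely the column $d_{k,n-s}^{(1)}$ defining the new notation $V_{k,n-s}^{(1)}$. Two bookkeeping points deserve a check: the denominator $\det(V)$ is unaffected by the shift (the matrix $V$ carries no dependence on $n$), and the exponent $k-1+n-s$ must agree with the definition of $d_{k,n-s}^{(1)}$ exactly, which it does.

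With the shifted Binet formula in hand, I would substitute it into each branch of Theorem 2.10(iii): for $i=1$ put $s=j$ to get $\sum_{j=1}^{k} j\,\det(V_{k,n-j}^{(1)})/\det(V)$; for $1<i<k$ put $s=m+j-1$ to get the double sum $\sum_{m=1}^{i}\sum_{j=1}^{k} j\,\det(V_{k,n-m-j+1}^{(1)})/\det(V)$; and for $i=k$ put $s=j-1$ to get $\sum_{j=1}^{k} j\,\det(V_{k,n-j+1}^{(1)})/\det(V)$. These coincide term by term with the three expressions in the statement. I do not expect any genuine obstacle: the only thing to watch is the alignment between the summation shift $s$ and the subscript $n-s$ appearing in $V_{k,n-s}^{(1)}$, and this index matching is routine.
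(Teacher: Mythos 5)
Your proposal is correct and follows exactly the route the paper indicates: the corollary is stated as a direct consequence of the Binet formula $(1.7)$ and Theorem 2.10(iii), with each shifted term $f_{k,n-s}^{k}$ replaced by its determinant quotient $\det(V_{k,n-s}^{(1)})/\det(V)$. Your index bookkeeping ($s=j$, $s=m+j-1$, $s=j-1$ in the three branches) matches the paper's statement term by term.
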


\subsection{\protect\bigskip Combinatorial Representation of the Generalized
Order-$k$ Fibonacci and Lucas Numbers}

In this subsection, we obtain some combinatorial representations of $i$-th
sequences of $k$SO$k$F and $k$SO$k$L with the help of combinatorial
representations of Generalized Fibonacci and Lucas Polynomials. \newline
$i$-th sequences of $k$SO$k$F can be stated in terms of $k$-th sequences of $%
k$SO$k$F as follows. For $c_{i}=1$ $(1<i<k),$ 
\begin{equation*}
f_{k,n}^{\text{ }i}=\sum\limits_{m=1}^{k-i+1}f_{k,n-m+1}^{k}.
\end{equation*}%
For $t_{i}=1$ $(1<i<k),$ $F_{k,n-1}(t)$ is reduced to sequence $f_{k,n}^{k}.$
So for $t_{i}=1$ $(1<i<k)$, $f_{k,n}^{\text{ }i}=\sum%
\limits_{m=1}^{k-i+1}F_{k,n-m}(t)$ and using (1.9) we have 
\begin{equation*}
f_{k,n}^{\text{ }i}=\sum\limits_{m=1}^{k-i+1}\sum\limits_{a\vdash \left(
n-m\right) }\binom{\left\vert a\right\vert }{a_{1,\ldots ,}a_{k}}.
\end{equation*}%
It is obvious that, for $t_{i}=1$ $(1<i<k)$, $F_{k,n}(t)=f_{k,n}^{1}$\ and $%
F_{k,n}(t)=f_{k,n+1}^{k},$ respectively. Then, for all $m,n\in 
\mathbb{Z}
^{+},$ 
\begin{equation}
f_{k,n}^{\text{ }i}=\left\{ 
\begin{array}{c}
\sum\limits_{a\vdash n}\binom{\left\vert a\right\vert }{a_{1,\ldots ,}a_{k}}%
\text{ \ \ \ \ \ \ \ \ \ \ \ \ \ \ \ \ \ \ \ \ \ \ \ \ \ \ \ \ \ \ \ \ if }%
i=1 \\ 
\sum\limits_{m=1}^{k-i+1}\sum\limits_{a\vdash (n-m)}\binom{\left\vert
a\right\vert }{a_{1,\ldots ,}a_{k}}\text{ \ \ \ \ \ \ \ \ \ \ \ \ \ if }1<i<k
\\ 
\sum\limits_{a\vdash (n-1)}\binom{\left\vert a\right\vert }{a_{1,\ldots
,}a_{k}}\text{ \ \ \ \ \ \ \ \ \ \ \ \ \ \ \ \ \ \ \ \ \ \ \ \ \ }\ \ \ 
\text{if }i=k%
\end{array}%
\right. .
\end{equation}%
\ \ \ \ 

\begin{lem}
$\left[ 5\right] $ \bigskip Let $f_{k,n}^{k}$ be the $k$-th sequences of \ $%
k $SO$k$F, then,%
\begin{equation*}
f_{k,n}^{k}=\sum\limits_{m\vdash (n-1+k)}\frac{m_{k}}{\left\vert
m\right\vert }\times \binom{\left\vert m\right\vert }{m_{1},\ldots ,m_{k}}
\end{equation*}%
where $m=(m_{1},m_{2},\ldots ,m_{k})$ nonnegative integers satisfying 
\newline
$m_{1}+2m_{2}+\ldots +km_{k}=n-1+k.$ In addition for $0\leq i\leq n-1$%
\begin{equation*}
f_{k,n-i}^{k}=\sum\limits_{m\vdash (n-i+k-1)}\frac{m_{k}}{\left\vert
m\right\vert }\times \binom{\left\vert m\right\vert }{m_{1},\ldots ,m_{k}}
\end{equation*}%
where the summation is over nonnegative integers satisfying \newline
$m_{1}+2.m_{2}+\ldots +k.m_{k}=n-1-i+k.$
\end{lem}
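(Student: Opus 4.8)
The plan is to reduce the weighted multinomial sum in the lemma to the \emph{unweighted} multinomial sum that has already been identified with $f_{k,n}^{k}$ in the third case of (2.11). Recall that (2.11), which in turn rests on (1.9) together with the reduction $F_{k,n-1}(t)=f_{k,n}^{k}$ at $t_{i}=1$, gives $f_{k,n}^{k}=\sum_{a\vdash(n-1)}\binom{|a|}{a_{1},\ldots,a_{k}}$. Thus it suffices to prove that the right-hand side of the lemma collapses to this same quantity.

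First I would record the elementary multinomial identity
\begin{equation*}
\frac{m_{k}}{|m|}\binom{|m|}{m_{1},\ldots,m_{k}}=\binom{|m|-1}{m_{1},\ldots,m_{k-1},m_{k}-1},
\end{equation*}
which holds whenever $m_{k}\geq 1$ and follows by cancelling one factor of $m_{k}$ against $|m|$ in the factorial expression. When $m_{k}=0$ the summand $\tfrac{m_{k}}{|m|}\binom{|m|}{m_{1},\ldots,m_{k}}$ vanishes, so those terms may be dropped at no cost; the sum in the lemma therefore runs effectively over those $m\vdash(n-1+k)$ with $m_{k}\geq 1$.

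Next I would reindex. Setting $m_{j}'=m_{j}$ for $1\leq j\leq k-1$ and $m_{k}'=m_{k}-1$ turns the right-hand factor above into $\binom{|m'|}{m_{1}',\ldots,m_{k}'}$, since $|m|-1=|m'|$, while the constraint $m_{1}+2m_{2}+\cdots+km_{k}=n-1+k$ becomes $m_{1}'+2m_{2}'+\cdots+km_{k}'=n-1$, i.e. $m'\vdash(n-1)$. As $m$ ranges over all tuples with $m_{k}\geq 1$ and $m\vdash(n-1+k)$, the tuple $m'$ ranges exactly once over all $m'\vdash(n-1)$, so
\begin{equation*}
\sum_{m\vdash(n-1+k)}\frac{m_{k}}{|m|}\binom{|m|}{m_{1},\ldots,m_{k}}=\sum_{m'\vdash(n-1)}\binom{|m'|}{m_{1}',\ldots,m_{k}'}=f_{k,n}^{k},
\end{equation*}
which is the first assertion. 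The companion formula for $f_{k,n-i}^{k}$ then follows by substituting $n-i$ in place of $n$ throughout, the hypothesis $0\leq i\leq n-1$ guaranteeing that the shifted index stays in the range where (2.11) applies.

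The only real care required is bookkeeping: verifying the factorial cancellation, checking that the zero terms arising from $m_{k}=0$ are precisely those discarded under the reindexing, and confirming that $m_{k}\mapsto m_{k}-1$ is a bijection between the two index sets sending the linear constraint $\sum_{j}jm_{j}=n-1+k$ to $\sum_{j}jm_{j}'=n-1$. No genuine obstacle arises beyond this, since the substantive step — the passage from the generalized Fibonacci polynomial to its multinomial form — is already packaged in (1.9) and (2.11).
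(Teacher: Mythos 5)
Your argument is correct, and it is worth noting that the paper itself offers no proof of this lemma at all: it is quoted verbatim from reference $[5]$ (K\i l\i \c{c} and Ta\c{s}ci), where it is obtained from the combinatorial representation of entries of powers of a companion matrix. What you have supplied is therefore a genuinely different and more self-contained route. Your key step, the identity
\begin{equation*}
\frac{m_{k}}{\left\vert m\right\vert }\binom{\left\vert m\right\vert }{m_{1},\ldots ,m_{k}}=\binom{\left\vert m\right\vert -1}{m_{1},\ldots ,m_{k-1},m_{k}-1}\qquad (m_{k}\geq 1),
\end{equation*}
checks out by direct factorial cancellation, the $m_{k}=0$ terms vanish (note $\left\vert m\right\vert \geq 1$ since $m\vdash (n-1+k)$ with $n-1+k\geq k\geq 2$, so there is no division by zero), and the shift $m_{k}\mapsto m_{k}-1$ is indeed a bijection carrying the constraint $\sum_{j}jm_{j}=n-1+k$ to $\sum_{j}jm_{j}^{\prime }=n-1$. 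This reduces the weighted sum to $\sum_{a\vdash (n-1)}\binom{\left\vert a\right\vert }{a_{1},\ldots ,a_{k}}$, which is exactly the $i=k$ case of (2.11), itself a consequence of (1.9) and the reduction $F_{k,n-1}(t)\mapsto f_{k,n}^{k}$ at $t_{i}=1$; a quick check at $k=2$, $n=3$ (the partitions of $4$ give $0+1+1=2=f_{2,3}^{2}$) confirms the normalization. The trade-off is that your proof leans on (1.9)/(2.11) as established facts, whereas the cited source derives the formula independently from matrix powers; but as a derivation internal to this paper your bijective reduction is cleaner and makes the logical dependence on the polynomial identity (1.9) explicit. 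The extension to $f_{k,n-i}^{k}$ by substituting $n-i$ for $n$ is immediate under the stated hypothesis $0\leq i\leq n-1$.
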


\bigskip Then we have the following corollary using (Theorem 2.10. $iii$).

\begin{cor}
Let $l_{k,n}^{i}$ be the $k$SO$k$L, then, for $m,n\in 
\mathbb{Z}
^{+},$%
\begin{equation*}
l_{k,n}^{i}=\left\{ 
\begin{array}{l}
\sum\limits_{i=1}^{k}j\sum\limits_{m\vdash (n-j+k-1)}\frac{m_{jk}}{%
\left\vert m\right\vert }\times \binom{\left\vert m\right\vert }{%
m_{j1},\ldots ,m_{jk}}\text{ \ \ \ \ \ \ \ \ \ if }i=1 \\ 
\sum\limits_{m=1}^{i}\sum\limits_{j=1}^{k}j\sum\limits_{t\vdash (n-m-j+k)}%
\frac{t_{mjk}}{\left\vert t\right\vert }\times \binom{\left\vert
t\right\vert }{t_{mj1},\ldots ,t_{mjk}}\ \ \text{ \ if }1<i<k \\ 
\sum\limits_{i=1}^{k}j\sum\limits_{m\vdash (n-j+k)}\frac{m_{jk}}{\left\vert
m\right\vert }\times \binom{\left\vert m\right\vert }{m_{j1},\ldots ,m_{jk}}%
\text{\ \ \ \ \ \ \ \ \ \ \ \ \ if }i=k%
\end{array}%
\right.
\end{equation*}%
\bigskip where $t=(t_{mj1},\ldots ,t_{mjk})$ and $m=(m_{j1},m_{j2},\ldots
,m_{jk}).$
\end{cor}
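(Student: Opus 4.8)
The plan is to substitute the combinatorial expansion of $f_{k,N}^{k}$ furnished by Lemma 2.17 into the three-branch formula of Theorem 2.10 $(iii)$. Since that theorem already expresses $l_{k,n}^{i}$ entirely as weighted sums of the $k$-th sequence terms $f_{k,\cdot}^{k}$, the only task is to replace each Fibonacci term by its partition sum and collect the result; no reordering of summations and no cancellation occur.

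First I would isolate the index rule hidden in Lemma 2.17: for an argument $N$,
\[
f_{k,N}^{k}=\sum_{m\vdash (N-1+k)}\frac{m_{k}}{|m|}\binom{|m|}{m_{1},\ldots,m_{k}},
\]
so that the partition being summed is always one of $N-1+k$. Each branch of the corollary then follows by choosing $N$ to be the shifted index appearing in the corresponding branch of Theorem 2.10 $(iii)$.

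Concretely, for $i=k$ the theorem gives $l_{k,n}^{k}=\sum_{j=1}^{k}j\,f_{k,n+1-j}^{k}$; taking $N=n+1-j$ makes the partition argument $N-1+k=n-j+k$, exactly the index in the $i=k$ branch. For $i=1$, from $l_{k,n}^{1}=\sum_{j=1}^{k}j\,f_{k,n-j}^{k}$ the choice $N=n-j$ gives partitions of $n-j+k-1$. For $1<i<k$, applying the same substitution with $N=n-m-j+1$ to $l_{k,n}^{i}=\sum_{m=1}^{i}\sum_{j=1}^{k}j\,f_{k,n-m-j+1}^{k}$ produces partitions of $n-m-j+k$. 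These are precisely the three arguments listed in the statement.

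The one point requiring care---more bookkeeping than genuine obstacle---is the labelling of the multi-indices. Because each Fibonacci term sits inside an outer sum, its partition depends on the outer variables: on $j$ alone in the cases $i=1$ and $i=k$, and on both $m$ and $j$ in the middle case. I would therefore relabel the components as $m_{j1},\ldots,m_{jk}$ in the former and $t_{mj1},\ldots,t_{mjk}$ in the latter, matching the notation $m=(m_{j1},\ldots,m_{jk})$ and $t=(t_{mj1},\ldots,t_{mjk})$ fixed in the statement. With this relabeling the three displayed formulas read off immediately, completing the proof.
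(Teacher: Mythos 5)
Your proposal is correct and is exactly the argument the paper intends: the corollary is stated as an immediate consequence of Theorem 2.10 $(iii)$ and the preceding lemma, obtained by substituting the partition expansion of $f_{k,N}^{k}$ (partitions of $N-1+k$) into each branch, and your index computations $n-j+k-1$, $n-m-j+k$, $n-j+k$ all check out. The relabelling of multi-indices you describe matches the paper's notation as well.
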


\begin{cor}
Let $l_{k,n}^{i}$ be the $k$SO$k$L, then, for all $m,n\in 
\mathbb{Z}
^{+}$%
\begin{equation*}
l_{k,n}^{i}=\left\{ 
\begin{array}{c}
\sum\limits_{a\vdash (n-1)}\frac{n-1}{\mid a\mid }\binom{\left\vert
a\right\vert }{a_{1,\ldots ,}a_{k}}\text{ \ \ \ \ \ \ \ \ \ \ \ \ \ \ \ if }%
i=1 \\ 
\sum\limits_{m=1}^{i}\sum\limits_{a\vdash (n-m)}\frac{n-m}{\mid a\mid }%
\binom{\left\vert a\right\vert }{a_{1,\ldots ,}a_{k}}\text{ if }1<i<k \\ 
\sum\limits_{a\vdash n}\frac{n}{\mid a\mid }\binom{\left\vert a\right\vert }{%
a_{1,\ldots ,}a_{k}}\text{ \ \ \ \ \ \ \ \ \ \ \ \ \ \ \ \ \ \ if }i=k%
\end{array}%
\right. .
\end{equation*}

\begin{proof}
For $t_{i}=1(1\leq i\leq k),$ $G_{k,n}$ is reduced to $l_{k,n}^{k}.$ Since $%
l_{k,n}^{k}=\sum\limits_{a\vdash n}\frac{n}{\mid a\mid }\binom{\left\vert
a\right\vert }{a_{1,\ldots ,}a_{k}}$ from (1.10) and by using (2.7) the
proof is completed.
\end{proof}
\end{cor}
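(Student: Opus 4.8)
The plan is to establish the third branch ($i=k$) directly from the combinatorial expansion of the Lucas polynomials and then to read off the other two branches from the linear relation of Lemma 2.9; no fresh induction is needed once the polynomial identity (1.10) has been specialized.

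First I would settle the case $i=k$. Setting $t_1=\cdots=t_k=1$ turns the recurrence (1.8) for $G_{k,n}(t)$ into the generalized order-$k$ Lucas recurrence (2.1)--(2.2), and the seed values degenerate onto those of $l_{k,n}^k$; this is precisely the reduction recorded at the opening of Section 2, so $G_{k,n}(1,\ldots,1)=l_{k,n}^k$. Evaluating the right-hand side of (1.10) at $t_s=1$ collapses every monomial $t_1^{a_1}\cdots t_k^{a_k}$ to $1$, which leaves
\[
l_{k,n}^k=\sum_{a\vdash n}\frac{n}{|a|}\binom{|a|}{a_1,\ldots,a_k},
\]
the bottom line of the corollary.

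Next I would propagate this to $i=1$ and to $1<i<k$ using (2.7). For $i=1$, Lemma 2.9 gives $l_{k,n}^1=l_{k,n-1}^k$, so substituting $n-1$ for $n$ in the displayed identity yields the first branch, with weight $\tfrac{n-1}{|a|}$ and the constraint $a\vdash(n-1)$. For $1<i<k$, Lemma 2.9 gives $l_{k,n}^i=\sum_{m=1}^{i}l_{k,n-m}^k$; inserting the $i=k$ identity with $n$ replaced by $n-m$ and summing over $m$ produces the middle branch exactly as stated. This step is pure bookkeeping, but it must be executed carefully: each shift $n\mapsto n-1$ and $n\mapsto n-m$ has to be carried simultaneously through the numerator of the weight and through the partition index $a\vdash(\cdot)$, or the three cases will fail to line up.

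The main obstacle is the identification $G_{k,n}(1,\ldots,1)=l_{k,n}^k$ on which the $i=k$ case rests: one must confirm that the specialized polynomial recurrence together with the boundary rule for $l_{k,n}^i$ at $i=k$ reproduce the Lucas-polynomial values at every initial index, so that (1.10) transfers verbatim. Granting that identification, the corollary follows immediately, since the remaining two branches are nothing more than the short telescoping sums supplied by Lemma 2.9.
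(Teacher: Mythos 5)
Your proposal is correct and follows essentially the same route as the paper: specialize the combinatorial formula (1.10) for $G_{k,n}(t)$ at $t_1=\cdots=t_k=1$ to obtain the $i=k$ branch, then use Lemma 2.9 (equation (2.7)) to shift indices and produce the $i=1$ and $1<i<k$ branches. You merely spell out the index bookkeeping and the identification $G_{k,n}(1,\ldots,1)=l_{k,n}^{k}$ more explicitly than the paper does.
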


\begin{cor}
\bigskip Let $l_{k,n}^{i}$ be the $k$SO$k$L, then, for $1\leq i\leq k$ and $%
m,n\in 
\mathbb{Z}
^{+}$%
\begin{equation*}
l_{k,n}^{i}=\left\{ 
\begin{array}{c}
\sum\limits_{j=1}^{k}j\sum\limits_{a\vdash (n-1-j)}\binom{\left\vert
a\right\vert }{a_{1,\ldots ,}a_{k}}\text{ \ \ \ \ \ \ \ \ \ \ \ \ if }i=1 \\ 
\sum\limits_{m=1}^{i}\sum\limits_{j=1}^{k}j\sum\limits_{a\vdash (n-m-j)}%
\binom{\left\vert a\right\vert }{a_{1,\ldots ,}a_{k}}\text{ \ if }1<i<k \\ 
\sum\limits_{j=1}^{k}j\sum\limits_{a\vdash (n-j)}\binom{\left\vert
a\right\vert }{a_{1,\ldots ,}a_{k}}\text{ \ \ \ \ \ \ \ \ \ \ \ \ \ \ \ \ \
if }i=k%
\end{array}%
\right. .
\end{equation*}
\end{cor}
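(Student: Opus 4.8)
The plan is to read the result straight off the combinatorial form of the $k$-th sequence $f_{k,n}^{k}$ together with the three-case expansion of $l_{k,n}^{i}$ already provided by Theorem 2.10 $(iii)$; no fresh induction is required, only a substitution with careful index bookkeeping.

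First I would record the combinatorial shape of $f_{k,n}^{k}$. Specializing $t_{i}=1$ $(1\le i\le k)$ in (1.9) and invoking $F_{k,n}(t)=f_{k,n+1}^{k}$ (equivalently, reading off the $i=k$ line of (2.7)) gives
\[
f_{k,n}^{k}=\sum_{a\vdash(n-1)}\binom{|a|}{a_{1},\ldots,a_{k}},
\]
and, after the substitution $n\mapsto n-s$,
\[
f_{k,n-s}^{k}=\sum_{a\vdash(n-s-1)}\binom{|a|}{a_{1},\ldots,a_{k}}.
\]
Since there is no $a$ with $a\vdash r$ for $r<0$, the right-hand side vanishes in exactly the small- and negative-index cases where $f_{k,n-s}^{k}$ itself vanishes (e.g.\ $f_{k,0}^{k}=0$), so the identity is internally consistent across every index range that appears below.

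Next I would substitute this into Theorem 2.10 $(iii)$, one case at a time. For $i=k$ the theorem gives $l_{k,n}^{k}=\sum_{j=1}^{k}jf_{k,n+1-j}^{k}$; taking $s=j-1$ converts each summand into $\sum_{a\vdash(n-j)}\binom{|a|}{a_{1},\ldots,a_{k}}$, which is the third line of the claim. For $i=1$ the theorem gives $l_{k,n}^{1}=\sum_{j=1}^{k}jf_{k,n-j}^{k}$; here $s=j$ yields $\sum_{a\vdash(n-1-j)}\binom{|a|}{a_{1},\ldots,a_{k}}$, the first line. For $1<i<k$ the theorem gives $l_{k,n}^{i}=\sum_{m=1}^{i}\sum_{j=1}^{k}jf_{k,n-m-j+1}^{k}$; here $s=m+j-1$ yields $\sum_{a\vdash(n-m-j)}\binom{|a|}{a_{1},\ldots,a_{k}}$, the middle line. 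Collecting the three cases produces the stated formula, and the range $m,n\in\mathbb{Z}^{+}$ is inherited from Theorem 2.10 and (2.7).

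The only step needing care — the one I would treat as the main (though minor) obstacle — is the bookkeeping of the shifts so that the arguments of $a\vdash(\cdot)$ come out as exactly $n-1-j$, $n-m-j$, and $n-j$. In particular one must use the $i=k$ specialization $f_{k,n}^{k}=\sum_{a\vdash(n-1)}\binom{|a|}{a_{1},\ldots,a_{k}}$ (equivalently $F_{k,n}=f_{k,n+1}^{k}$), not a lower column of (2.7), because it is precisely the $-1$ in that subscript that absorbs the $+1$ inside $f_{k,n+1-j}^{k}$ and delivers the clean argument $n-j$. Everything beyond this is a direct term-by-term substitution.
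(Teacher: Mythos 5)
Your proposal is correct in substance and follows exactly the route the paper intends: the paper's entire proof reads ``trivial from (1.9), (2.7)'', i.e.\ substitute the $t_{i}=1$ specialization $f_{k,N}^{k}=\sum_{a\vdash (N-1)}\binom{\left\vert a\right\vert }{a_{1},\ldots ,a_{k}}$ of (1.9) into the three cases of Theorem 2.10 $(iii)$, and your index bookkeeping ($n-1-j$, $n-m-j$, $n-j$) is right. The one caveat concerns precisely the point you flagged as needing care: your claim that the combinatorial expression vanishes exactly where $f_{k,n-s}^{k}$ does is not quite true, because the boundary conditions of (1.2) give $f_{k,1-k}^{k}=1$ while $\sum_{a\vdash (-k)}$ is an empty sum, so the identity (and hence the corollary as stated) fails for small $n$ --- for instance $k=2$, $i=1$, $n=1$ gives $l_{2,1}^{1}=2$ while the displayed formula returns $0$. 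This blemish is inherited from the paper itself, which asserts the corollary for all $n\in \mathbb{Z}^{+}$ without addressing the boundary term, so it does not distinguish your argument from the authors'; but a careful write-up should either restrict $n$ (e.g.\ to $n\geq k$ in the $i<k$ cases) or handle the term $f_{k,1-k}^{k}$ separately.
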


\begin{proof}
\bigskip Proof is trivial from (1.9), (2.7).
\end{proof}

\begin{cor}
Let $l_{2,n}^{2}$ be the second sequence of the $2$SO$2$L, then,%
\begin{equation*}
l_{2,n}^{2}=\sum\limits_{j=1}^{2}j\sum\limits_{s=0}^{\left\lceil \frac{n-j}{2%
}\right\rceil }\binom{n-j-s}{s}
\end{equation*}%
where $\binom{n}{s}$ is combinations $s$ of $n$ objects, such that $\binom{n%
}{s}=0$ if $n<s.$
\end{cor}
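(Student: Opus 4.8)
The plan is to specialize Corollary 2.20 to the case $k=2$, $i=k=2$, and then rewrite the inner multinomial sum as a binomial sum by comparing the two combinatorial representations (1.9) and (1.12) of the generalized Fibonacci polynomials at $t_1=t_2=1$.

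First I would take $k=2$ and $i=k=2$ in the $i=k$ branch of Corollary 2.20, which gives
\[
l_{2,n}^{2}=\sum_{j=1}^{2}j\sum_{a\vdash (n-j)}\binom{\left\vert a\right\vert}{a_{1},a_{2}}.
\]
This already reproduces the outer structure $\sum_{j=1}^{2}j(\cdots)$ of the claim, so the only remaining task is to evaluate the inner sum $\sum_{a\vdash m}\binom{|a|}{a_{1},a_{2}}$ at $m=n-j$.

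Next I would observe that this inner sum is exactly $F_{2,m}(t)$ evaluated at $t_1=t_2=1$. By the polynomial representation (1.9),
\[
F_{2,m}(1,1)=\sum_{a\vdash m}\binom{\left\vert a\right\vert}{a_{1},a_{2}},
\]
while the alternative representation (1.12), evaluated at $t_1=t_2=1$ and with its summation index relabeled from $j$ to $s$, gives
\[
F_{2,m}(1,1)=\sum_{s=0}^{\left\lceil \frac{m}{2}\right\rceil}(-1)^{s}\binom{m-s}{s}\,(-1)^{s}=\sum_{s=0}^{\left\lceil \frac{m}{2}\right\rceil}\binom{m-s}{s},
\]
where the two sign factors cancel since $(-1)^{s}(-t_2)^{s}=(-1)^{2s}=1$ and the power $F_{1}^{\,m-2s}$ equals $1$ because $F_{1}=t_{1}=1$. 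Equating the two expressions and substituting $m=n-j$ yields $\sum_{a\vdash (n-j)}\binom{|a|}{a_{1},a_{2}}=\sum_{s=0}^{\lceil (n-j)/2\rceil}\binom{n-j-s}{s}$; plugging this back into the first display produces precisely the asserted formula.

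The argument is entirely a matter of specialization and bookkeeping, so there is no genuine obstacle. The only points requiring care are the sign cancellation in (1.12) and the relabeling of its summation index to $s$ so that it does not collide with the outer index $j$; one should also note that the stated convention $\binom{N}{s}=0$ for $N<s$ makes the ceiling bound $\left\lceil (n-j)/2\right\rceil$ harmless, since any terms beyond the effective range vanish automatically and (1.12) therefore matches the claimed upper limit.
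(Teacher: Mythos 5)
Your proposal is correct and follows essentially the same route as the paper: both rest on the expansion $l_{2,n}^{2}=\sum_{j=1}^{2}jf_{2,n+1-j}^{2}$ from Theorem 2.10($iii$) together with the combinatorial representation of $F_{2,m}(t)$ at $t_{1}=t_{2}=1$, your only addition being a harmless detour through the multinomial form (1.9) before converting to the binomial sum, and your sign-cancellation and ceiling-bound remarks are exactly the bookkeeping the paper leaves implicit. Note only that the representation $F_{2,n}(t)=\sum_{j}(-1)^{j}\binom{n-j}{j}F_{1}^{n-2j}(-t_{2})^{j}$ is equation (1.11), not (1.12), and the corollary you specialize is 2.19 rather than 2.20 (which is the statement itself).
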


\begin{proof}
In (1.11), $F_{2,n}(t)=\sum\limits_{j=0}^{\left\lceil \frac{n}{2}%
\right\rceil }(-1)^{j}\binom{n-j}{j}F_{1}^{n-2j}(t)(-t_{2})^{j}$ and for $%
t_{i}=1$ and $c_{i}=1$ \newline
$(1\leq i\leq k),\ F_{2,n-1}(t)$ is reduced to sequence $f_{k,n}^{2}$. Proof
is completed by using $f_{k,n}^{2}$($c_{i}=1$ for $1\leq i\leq k$) and
(2.10. $iii$).
\end{proof}

\proof[Acknowledgements] The authors are grateful for financial support from
the Research Fund of Gaziosmanpa\c{s}a University under grand no:2009/46.

\bigskip

\bigskip


\begin{thebibliography}{99}
\bibitem{[1]} M. C. Er, Sums of Fibonacci Numbers by Matrix Method. \textit{%
Fibonacci Quarterly}. 22(1984), no. 3, 204-207.

\bibitem{[2]} D. Kalman, Generalized Fibonacci Numbers by Matrix Method. 
\textit{Fibonacci Quarterly}. 20(1982), no. 1, 73-76.

\bibitem{[3]} D. Ta\c{s}ci and E. K\i l\i \c{c}, On the Order-$k$
Generalized Lucas Numbers. \textit{Appl. Math. Comput.}. 155(2004), no. 3,
637-641.

\bibitem{[4]} E. Karaduman, An Application of Fibonacci Numbers in Matrices. 
\textit{Applied Mathematics and Computation}. 147(2004) 903-908.

\bibitem{[5]} E. K\i l\i \c{c} and D. Ta\c{s}ci, On the Generalized Order-$k$
Fibonacci and Lucas Numbers. \textit{Rocky Mount. Jour. of Math}. 36(2006),
1915-1925.

\bibitem{[6]} E. K\i l\i \c{c} and D. Ta\c{s}ci, The Generalized Binet
Formula, Representation and Sums of The Generalized Order-$k$ Pell Numbers. 
\textit{Taiwanese Jour. of Math}. 10(2006), no. 6, 1661-1670.

\bibitem{[7]} T. MacHenry, Generalized Fibonacci and Lucas Polynomials and
Multiplicative Arithmetic Functions. \textit{Fibonacci Quarterly}. 38(2000),
17-24.

\bibitem{[8]} T. MacHenry and K. Wong, Degree $k$ Linear Recursions mod($p$)
and Number Fields. \textit{Rocky Mount. Jour. of Math}. \textit{to appear}.

\bibitem{[9]} T. MacHenry and K. Wong, A Correspondence Between the Isobaric
Ring and Multiplicative Arithmetic Functions. \textit{Submitted}.

\bibitem{[10]} E.P. Miles, Generalized Fibonacci Numbers and Associated
Matrices, Amer. Math. Monthly 67 (1960), 745-752.
\end{thebibliography}
\end{document}